\documentclass{article}

\usepackage{hyperref}
\usepackage{lipsum}
\usepackage{amsfonts}
\usepackage{graphicx}
\usepackage{float}
\usepackage{epstopdf}
\usepackage{algorithmic}
\usepackage{amsmath}
\usepackage{amsthm}
\usepackage{mathtools}
\usepackage{xcolor} 
\usepackage{subcaption} 
\usepackage{bbm} 
\usepackage[multiple]{footmisc}
\usepackage[a4paper, total={16cm, 20cm}]{geometry}

\ifpdf
  \DeclareGraphicsExtensions{.eps,.pdf,.png,.jpg}
\else
  \DeclareGraphicsExtensions{.eps}
\fi

\usepackage{enumitem}
\setlist[enumerate]{leftmargin=.5in}
\setlist[itemize]{leftmargin=.5in}

\ifpdf
\hypersetup{
  pdftitle={Matrix Rigidity & Ill-Posedness of Robust PCA},
  pdfauthor={J. Tanner, A. Thompson, and S. Vary}
}
\fi


\newtheorem{remark}{Remark}[section]
\newtheorem{conjecture}{Conjecture}[section]

\newtheorem{lemma}{Lemma}[section]
\newtheorem{theorem}{Theorem}[section]

\numberwithin{equation}{section} 

\title{Matrix rigidity and the ill-posedness of \\ Robust~PCA and matrix~completion\footnote{This publication is based on work partially supported by: the EPSRC I-CASE studentship (voucher 15220165) in partnership with Leonardo, The Alan Turing Institute through EPSRC (EP/N510129/1) and the Turing Seed Funding grant SF019, Institut Henri Poincar\'e (UMS 839 CNRS-Sorbonne Universit\'e), and LabEx CARMIN (ANR-10-LABX-59-01).}
}

\date{July 11, 2019}

\author{    Jared~Tanner\footnote{Mathematical Institute, University of Oxford, Woodstock Road, Oxford OX2 6GG, UK}
		\footnote{The Alan Turing Institute, British Library, London NW1 2DB, UK}
	\and Andrew~Thompson\footnote{National Physical Laboratory, Hampton Road, Teddington TW11 0LW, UK}
	\and Simon~Vary\footnotemark[2]
}

\usepackage{amsopn}

\newcommand{\pR}{\mathbb{R}}

\DeclareMathOperator{\LS}{LS}
\newcommand{\PO}{P_\Omega}
\DeclareMathOperator{\Rig}{Rig}
\newcommand{\bigO}{\mathcal{O}}
\DeclareMathOperator{\rank}{rank}

\newcommand{\ceil}[1]{\left\lceil #1 \right\rceil}
\newcommand{\MO}{M_n} 
\newcommand{\MOS}{M_n(\epsilon)} 
\newcommand{\MT}{N_n} 
\newcommand{\MTS}{N_n(\epsilon)} 
\newcommand{\MTN}{E} 
\newcommand{\MTH}{\hat{M}_{n'}} 
\newcommand{\MTHS}{\hat{M}_{n'}(\epsilon)} 

\begin{document}

\maketitle

\begin{abstract}
Robust Principal Component Analysis (PCA) (Cand{\`{e}}s et al., 2011)
and low-rank matrix completion (Recht et al., 2010) are extensions of
PCA that allow for outliers and missing entries respectively.
It is well-known that solving these problems requires a low coherence between the low-rank matrix and the canonical basis, since in the extreme cases -- when the low-rank matrix we wish to recover is also sparse -- there is an inherent ambiguity.
However, the well-posedness issue in both problems is an even more
fundamental one: in some cases, both Robust PCA and matrix completion
can fail to have any solutions due to the set of low-rank plus
sparse matrices not being closed, which in turn is equivalent to the
notion of the matrix rigidity function not being lower semicontinuous
(Kumar et al., 2014). By constructing infinite families of matrices,
we derive bounds on the rank and sparsity such that the set of
low-rank plus sparse matrices is not closed. We also demonstrate
numerically that a wide range of non-convex algorithms for both Robust
PCA and matrix completion have diverging components when applied to
our constructed matrices.  
An analogy can be drawn to the case of sets of higher order tensors
not being closed under canonical polyadic (CP) tensor rank, rendering
the best low-rank tensor approximation unsolvable (De Silva and Lim,
2008) and hence encourage the use of multilinear tensor rank (De
Lathauwer, 2000). 
\end{abstract}



\section{Introduction}
Principal Component Analysis (PCA) plays a crucial role in the analysis of high-dimensional data \cite{Vaswani2018rethinking, Ringner2008what, Abdi2010principal, Jolliffe1988principal} and is a widely used dimensionality reduction technique \cite{Han2017clustering,Hsu2016accumulation,Plesa2018multiplexed, Miehlbradt2018data}.
It involves solving a low-rank approximation which can be easily computed for moderate size problems \cite{Demmel1997applied} by computing the singular value decomposition (SVD), or for larger problem sizes using notions of sketching to compute leading portions of the SVD \cite{Halko2011finding,Drineas2006fast,Woodruff2014sketching}. 
Over the last decade PCA has been extended to allow for missing data (matrix completion) or data with either corrupted or few entries inconsistent with a low-rank model (Robust PCA).  In this manuscript we show that the set of matrices which are the sum of low-rank and sparse matrices is not closed for a range of rank, sparsity, and matrix dimensions; see Theorem \ref{thm:main_intro}.   Moreover there are a number of algorithms that, when given a matrix of a specific form and with constraints on the rank and sparsity, seek such a decomposition where the constituents diverge while at the same time the sum of the matrices converges to a bounded matrix outside of the feasible set of prescribed rank and sparsity, see Section \ref{sec:numerics}.  We thereby highlight a previously unknown issue practitioners might experience using these techniques. The situation is analogous to the lack of closedness for Tensor CP decomposition rank \cite{Hitchcock1928multiple,Hitchcock1927the} which motivates the notions of multilinear rank approximation~\cite{Lathauwer2000on}.

\subsection{Prior work}
Robust PCA (RPCA) solves a low-rank plus sparse matrix approximation with the sparse component allowing for few but arbitrarily large corruptions in the low-rank structure; that is, a matrix $M\in\pR^{m\times n}$ is decomposed into a low-rank matrix $L$ plus a sparse matrix $S$ 
\begin{equation}\label{eq:RPCA_problem}
	\min_{X\in \pR^{m\times n}} \|X-M\|_F, \quad \text{s.t.}\quad X\in \LS_{m,n}(r,s),
\end{equation}
where $\LS_{m,n}(r,s)$ is the set of $m\times n$ matrices that can be expressed as a rank $r$ matrix $L$ plus a sparsity $s$ matrix $S$
\begin{equation*}\label{eq:LS_def}
	\LS_{m,n}(r,s) = \left\{ L + S\in\pR^{m\times n}: \, \rank{(L)}\leq r, \, \|S\|_0 \leq s \right\}.
\end{equation*}
We omit the subscript and write $\LS(r,s)$ where the matrix size is implied from the context and use only a single subindex $\LS_n(r,s)$ to denote sets of square matrices $\LS_{n,n}(r,s)$.   Allowing the addition of a sparse matrix to the low-rank matrix can be viewed as modelling globally correlated structure in the low-rank component while allowing local inconsistencies, innovations, or corruptions.  Exemplar applications of this model include image restoration \cite{Gu2014weighted}, hyperspectral image denoising \cite{Gogna2014split, Chen2017denoising, Wei2016hyperspectral}, face detection \cite{Luan2014extracting, Wright2009robust}, acceleration of dynamic MRI data acquisition \cite{Otazo2015low, Xu2017dynamic}, analysis of medical imagery \cite{Baete2018low, Gao2011robust}, separation of moving objects in at otherwise static scene \cite{Bouwmans2016decomposition}, and target detection \cite{Oreifej2013simultaneous,Sabushimike2016low} .

Solving Robust PCA as formulated in \eqref{eq:RPCA_problem} is an NP-hard problem in general.  Provable solutions for the problem were first provided in \cite{Candes2011robust, Chandrasekaran2009ranksparsity} by solving the convex relaxation of the problem
\begin{equation}
	\min_{L\in\pR^{m\times n}} \|L\|_* + \lambda \|S\|_1, \quad \text{s.t.}\quad M = L + S, \label{eq:RPCA_problem_convex}
\end{equation}
where $\|\cdot\|_*$ denotes the Schatten 1-norm\footnote{The Schatten 1-norm is often also referred to as the nuclear norm \cite{Recht2010guaranteed}.} of a matrix (sum of its singular values) and $\|\cdot\|_1$ denotes the $l_1$ norm of a vectorised matrix (sum of absolute values of its entries).  In \cite{Candes2011robust}, authors show that exact decomposition of a low-rank plus sparse matrix is possible for randomly chosen sparsity locations even for the case of the sparsity level $s$ being a fixed fraction $\alpha m n$ with $\alpha \in (0,1)$.  The work of \cite{Chandrasekaran2009ranksparsity} takes a deterministic approach in which corrupted entries can have arbitrary locations but must be sufficiently spread such that the sparsity fraction of each row and column does not exceed $\alpha$.  In both the works of \cite{Candes2011robust} and \cite{Chandrasekaran2009ranksparsity}, as well as subsequent extensions, it is common to impose conditions on the singular vectors of the low-rank component being sufficiently uncorrelated with the canonical basis.

Robust PCA is closely related to the problem of recovering a low-rank matrix from incomplete observations referred to as matrix completion \cite{Recht2010guaranteed}.  The main difference between the two is that, in the case of a matrix completion, the indices of missing entries are known, and the aim is to solve
\begin{equation}\label{eq:MC_problem}
\min_{L\in \pR^{m\times n}} \|\PO \left(L\right) - \PO \left(M\right) \|_F,\quad \text{s.t.}\quad L\in\LS_{m,n}(r,0), \,\, |\Omega^c| = s,
\end{equation}
where $\PO$ is entry-wise subsampling of observed entries of $M$ with indices in $\Omega$.

Similarly to the case of Robust PCA, matrix completion can be approached by solving a convex relaxation formulation of the problem \cite{Candes2009exact, Candes2010thepower, Recht2010guaranteed}, but there are also a number of algorithms that solve the non-convex formulation directly while also providing recovery guarantees \cite{Cai2010SVT, Haldar2009rank, Kyrillidis2014matrix, Lee2010admira, Tanner2013normalized,Tanner2014alternating,Wen2012solving}. Such non-convex methods are typically observed to be able to recover matrices with higher ranks than is possible by solving the convex relaxed problem \cite{Tanner2013normalized}.

\subsection{Main contribution}
It is well known that the model $\LS_{m,n}(r,s)$ from \eqref{eq:LS_def} need not have a unique solution without further constraints, such as the singular vectors of the low-rank component being uncorrelated with the canonical basis as quantified by the incoherence condition with parameter $\mu$
\begin{gather}
	\label{eq:coherence}
	\max_{i\in \left\{1,\ldots, r\right\}} \left\| U^* e_i \right\|_{2} \leq \sqrt{\frac{\mu r}{m}}, \qquad \max_{i\in \left\{1,\ldots, r\right\}} \left\| V^*e_i \right\|_{2} \leq \sqrt{\frac{\mu r}{n}},
\end{gather}
where $L = U \Sigma V^*$ is the singular value decomposition of the rank $r$ component $L$ of size $m\times n$. The incoherence condition for small values of $\mu$ ensures that left and right singular vectors are well spread out and not sparse \cite{Candes2009exact, Recht2010guaranteed}.

Trivial examples of matrices with non-unique decompositions in $\LS(r,s)$ include any matrix with two nonzero entries in differing rows and columns as they are in $\LS(r,s)$ for any $r$ and $s$ such that $r+s=2$ with the entries of the matrix assigned to the sparse or low-rank components selected arbitrarily.  Moreover, completion of a low-rank matrix is impossible for sampling patterns $\PO$ that are disjoint from the support of the matrix $M$, which can be likely for matrices that have few nonzeros.  Both of the aforementioned problems are overcome by imposing a low coherence which ensures the singular vectors of the low-rank matrix have most entries being nonzero \cite{Chandrasekaran2009ranksparsity}. 

Herein we highlight the presence of a more fundamental difficulty: There are matrices for which Robust PCA and matrix completion have no solution in that iterative algorithms that attempt to solve them can generate sequences of iterates $(L^t, S^t)$ for which $\lim_{t\to\infty} \| M - (L^t + S^t)\|_F = 0$ and $L^t + S^t \in \LS(r,s)$ for all $t$, but $M^* = \lim_{t\to\infty} L^t + S^t \not\in \LS(r,s)$.
This is not because of the ambiguity between possible solutions or lack of information about the matrix, but instead because $\LS_{m,n}(r,s)$ is not a closed set.
Moreover, this is not an isolated phenomenon, as sequences of $\LS_{m,n}(r,s)$ matrices converging outside of the set can be constructed for a wide range of ranks, sparsities and matrix sizes.



\begin{theorem}[$\LS_n(r,s)$ is not closed] \label{thm:main_intro}
The set of low-rank plus sparse matrices $\LS_n(r,s)$ is not closed for $r\geq 1$, $s\geq1$ provided $(r+1)(s+2) \leq n$, or provided $(r+2)^{3/2}s^{1/2} \leq n$ where $s$ is of the form $s = p^2 r$ for an integer $p\geq 1$.
\end{theorem}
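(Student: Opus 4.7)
The plan is to prove non-closedness by an explicit construction: for each admissible triple $(r,s,n)$ I will exhibit a matrix $M \in \pR^{n \times n}$ together with a sequence $\{M_\epsilon\}_{\epsilon > 0} \subset \LS_n(r,s)$ such that $M_\epsilon \to M$ as $\epsilon \to 0^+$ while $M \notin \LS_n(r,s)$. The driving mechanism is that the rank-$r$ summand $L_\epsilon$ and the $s$-sparse summand $S_\epsilon$ each contain entries of order $\epsilon^{-1}$ that cancel in the sum, so that $M_\epsilon$ stays bounded. The underlying idea is already visible in miniature with
\[
L_\epsilon = \begin{pmatrix} \epsilon^{-1} & 1 \\ 1 & \epsilon \end{pmatrix}, \quad S_\epsilon = \begin{pmatrix} -\epsilon^{-1} & 0 \\ 0 & 0 \end{pmatrix}, \quad M_\epsilon = L_\epsilon + S_\epsilon \to \begin{pmatrix} 0 & 1 \\ 1 & 0 \end{pmatrix},
\]
where $L_\epsilon$ is rank one, $S_\epsilon$ is one-sparse, and a short case analysis rules out any rank-$1$ plus $1$-sparse decomposition of the limit.

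To reach the sufficient condition $(r+1)(s+2) \leq n$, I would scale this prototype up by arranging $r+1$ disjoint blocks of side $s+2$ along (roughly) the diagonal of $M$, each carrying a rank-$1$-plus-$s$-sparse divergent pair whose limit is not rigid enough under $s$-entry corrections. Done naively this yields only membership in $\LS_n(r+1,(r+1)s)$, so the $r+1$ blocks must be coupled: exploiting the slack $n - (r+1)(s+2) \geq 0$, I would tie them together through a single shared column or row, (i) forcing one of the rank-$1$ directions to be a linear combination of the other $r$, thereby reducing the combined rank to $r$, and (ii) aligning the divergent sparse corrections from different blocks so that they can be pooled into a single $s$-sparse matrix. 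The resulting $\{M_\epsilon\}$ lies in $\LS_n(r,s)$ and converges to a bounded limit $M$.

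The main obstacle is step three: showing $M \notin \LS_n(r,s)$, which is precisely a rigidity lower bound $\Rig_M(r) > s$. Assume toward a contradiction that $M = L + S$ with $\rank(L) \leq r$ and $\|S\|_0 \leq s$. Since the at most $s$ nonzero positions of $S$ must be distributed over the $r+1$ blocks, pigeonhole supplies a block in which $S$ has at most $\floor{s/(r+1)}$ nonzeros, so $L$ agrees with the corresponding limit-block on all but that many entries. Having engineered each block so that every such small correction still leaves an $(r{+}1)\times(r{+}1)$ full-rank minor intact, this forces $\rank(L) \geq r+1$, a contradiction. The tighter bound $(r+2)^{3/2}s^{1/2} \leq n$ for $s = p^2 r$ is then obtained by the same strategy using a Kronecker-style layout of $p \times p$ sub-blocks inside an $(r+2)\times(r+2)$ outer pattern, which packs the rigidity-creating structure into a matrix of side $O((r+2)^{3/2}s^{1/2})$; the pigeonhole-plus-minor contradiction transfers after re-balancing how the budget $s$ of allowed sparse entries is spread across the refined block hierarchy.
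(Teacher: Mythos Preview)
Your high-level plan (explicit $M_\epsilon \in \LS_n(r,s)$ converging to some $M$ with $\Rig(M,r)>s$) matches the paper, but the concrete construction you sketch has two genuine problems.

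First, your $2\times 2$ prototype is false: the limit $\bigl(\begin{smallmatrix}0&1\\1&0\end{smallmatrix}\bigr)$ \emph{is} in $\LS_2(1,1)$, via $\bigl(\begin{smallmatrix}0&1\\0&0\end{smallmatrix}\bigr)+\bigl(\begin{smallmatrix}0&0\\1&0\end{smallmatrix}\bigr)$, so no case analysis can rule it out. The smallest working instance is $3\times 3$; this is not a cosmetic slip, because the pigeonhole-over-$(r{+}1)$-blocks rigidity argument you propose later implicitly relies on each block being a scaled version of exactly this base case.

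Second, and more seriously, the ``coupling and pooling'' step is where the real work lies and your sketch does not supply a mechanism. You start with $r{+}1$ blocks each in $\LS(1,s)$, giving naively $\LS_n(r{+}1,(r{+}1)s)$, and then assert that a shared row/column can simultaneously (i) drop the rank by one and (ii) collapse $(r{+}1)s$ sparse entries down to $s$. Step (ii) in particular is not something a single shared row or column can do. The paper sidesteps this entirely: it builds a \emph{single} matrix $\bigl(\begin{smallmatrix}0_{r,r}&A\\B&0\end{smallmatrix}\bigr)$ whose $\epsilon$-perturbation factors as a rank-$r$ matrix plus an $r$-sparse correction $-\epsilon^{-1}I_r$ in the top-left corner, so for $r\le s$ one has $M_n(\epsilon)\in\LS_n(r,r)\subseteq\LS_n(r,s)$ with no pooling needed. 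The rigidity lower bound $\Rig(M_n,r)>s$ is then obtained not by pigeonhole over blocks but by exhibiting $2\lceil (s{+}1)/2\rceil$ overlapping $(r{+}1)\times(r{+}1)$ minors, each of which forces a distinct nonzero of $S$ into its non-overlapping part. The case $r>s$ is handled separately by padding with a fixed rank-$(r{-}s)$ block. Your description of the $s=p^2r$ regime (a $p\times p$ tiling lifting the rigidity bound by a factor $p^2$) is essentially what the paper does, but note that the inner tile is the construction just described, not an ``$(r{+}2)\times(r{+}2)$ outer pattern'' of coupled blocks.
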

\begin{proof}
By Theorem \ref{thm:main} and Theorem \ref{lemma6}.
\end{proof}

Theorem \ref{thm:main_intro} implies that there are matrices $M$ such that problem \eqref{eq:RPCA_problem} is ill-posed in that there are sequences $M^t=L^t+S^t$: for which $M^t\in\LS_n(r,s)$ for all $t$ but for which  $\lim_{t\rightarrow\infty} M^t=M\notin LS_n(r,s)$; moreover, the proof of Theorem \ref{thm:main} and Theorem \ref{lemma6} is constructive in that we design the matrices $L^t$ and $S^t$ to satisfy the aforementioned property.  The problem size bounds in Theorem \ref{thm:main_intro} allow for matrices with $r = \bigO(n^l)$ to have number of corruptions of order $s = \bigO(n^{2-3l})$ for $l\in[0,1/2]$, which for constant rank allows $s$ to be quadratic in $n$, and for $\l\in(1/2,1]$ to have the number of corruptions of order $s=\bigO(n^{(1-l)})$.  In Section \ref{sec:example} we illustrate the non-closedness of $\LS_3(1,1)$ and the consequent ill-posedness of the corresponding Robust PCA and low-rank matrix completion problems.

\subsubsection{Simple example of $\LS_3(1,1)$ not being closed}\label{sec:example}
Consider solving for the optimal $\LS(1,1)$ approximation to the following $3\times 3$ matrix, which is a special case of construction given in \cite{Kumar2014using} in the context of the matrix rigidity function not being lower semicontinuous.
\begin{equation}
\label{eq:RPCA_example}
\begin{gathered}
	\min_{X\in \pR^{3\times 3}} \|X-M\|_F, \qquad \text{s.t.}\quad X\in \LS(1,1),\\
	M = \begin{pmatrix}
        			0  & 1 & 1\\
        			1 & 0  & 0 \\
        			1 & 0  & 0
        		\end{pmatrix}
\end{gathered}
\end{equation}
Consider the following sequence of matrices $X_{\epsilon}$
\begin{gather*}
		X_\epsilon = \begin{pmatrix}
        				0  & 1 & 1\\
        				1 & \epsilon  & \epsilon \\
        				1 & \epsilon  & \epsilon
        		\end{pmatrix} \in \LS(1,1) \\ 
		= \underbrace{\begin{pmatrix}
        				1/\epsilon  & 1 & 1\\
        				1 & \epsilon  & \epsilon \\
        				1 & \epsilon  & \epsilon
        		\end{pmatrix}}_{L_\epsilon} + 
		\underbrace{\begin{pmatrix}
        				- 1/\epsilon  & 0 & 0\\
        				0 & 0  & 0 \\
        				0 & 0  & 0
        		\end{pmatrix}}_{S_\epsilon},
\end{gather*} 
which can decrease the objective function $\|X_\epsilon-M\|_{F} = 2\epsilon$ to zero as $\epsilon\rightarrow 0$, but at the cost of the constituents $L_{\epsilon}$ and $S_{\epsilon}$ diverging with unbounded energy. 
Moreover, the sequence which minimizes the error converges to a matrix $M$ lying outside of the feasible set $\LS(1,1)$ and is in the set $\LS(1,2)$ instead.
By the fact that $M\not\in\LS(1,1)$, we have that zero objective value cannot be attained and therefore one cannot construct sequences that yield the desired solution. Therefore Robust PCA as posed in \eqref{eq:RPCA_example} does not have a global minimum.
As the objective function is decreased towards zero, the energy of both the low-rank and the sparse components diverge to infinity.  Likewise, we could consider an instance of the matrix completion problem \eqref{eq:MC_problem} in which the top left entry of $M$ is missing and a rank $1$ approximation is sought. We see that a rank $1$ solution cannot be obtained as there does not exist a choice for the top left entry that would reduce the rank of $M$ to $1$. However, the sequence $L_\epsilon$ decreases the objective arbitrarily close to zero while the energy of the iterates grows without bounds, $\|L_\epsilon\|_F \rightarrow \infty$.

\subsection{Connection with matrix rigidity}
Robust PCA is closely related to the notion of the \textit{matrix rigidity} function which was originally introduced in complexity theory by Valiant \cite{Valiant1977graph} and refers to the minimum number of entries of $M$ that must be changed in order to reduce it to rank $r$ or lower.
\begin{equation*}
	\Rig(M, r) = \min_{S \in \pR^{m \times n}} \|S\|_0, \,\, \text{s.t.} \,\, \rank(M-S) \leq r.\footnote{Note that the original definition \cite{Valiant1977graph} works with $\rank(M+S)\leq r$. Here, we change the sign to be consistent with RPCA notation, $M = L+S$ and $\rank(L)\leq r$.}
\end{equation*}
Matrix rigidity is upper bounded for any $M\in\pR^{n\times n}$ and rank $r$ as 
\begin{equation}
	\Rig(M,r) \leq (n-r)^2.
\end{equation}
due to elementary matrix properties \cite{Valiant1977graph}.  
Matrices which achieve this upper bound for every $r$ are referred to as \textit{maximally rigid} and it was only recently showed in \cite{Kumar2014using} how to construct them explicitly, which was a long standing open question originally posed by Valiant in 1977. 

Matrix rigidity has important consequences for complexity of linear algebraic circuits but is also of interest for its mathematical properties. 
The work of \cite{Kumar2014using} also provides an example of the rigidity function not being lower semicontinuous, which implies the set $\LS_{m,n}(1,1)$ is not closed. 
Here, we generalize the result, providing non-closedness examples for many ranks, sparsities and matrix sizes, and discuss consequences for Robust PCA and matrix completion problems.  In Section \ref{sec:proofs} we prove Theorem \ref{thm:main_intro} and in Section \ref{sec:numerics} we illustrate how this phenomenon can cause several Robust PCA and matrix completion algorithms to diverge.


\section{Main result}\label{sec:proofs}

We extend the example of $\LS_{3}(1,1)$ with $M_3\in\pR^{3\times 3}$ given in \eqref{eq:RPCA_example} by constructing $\MO, \MT\not\in\LS_{n}(r,s)$ and yet for which there exists a sequence of matrices $M^{(i)}_n(\epsilon)$ which are in $\LS_{n}(r,s)$ and $\lim_{\epsilon \rightarrow 0} \|M_n^{(i)} - M^{(i)}_n(\epsilon)\|_F$ = 0. 
Matrix $\MOS$ as in \eqref{eq:lemma2_construction} demonstrates that $\LS_{n}(r,s)$ is not closed for $r \leq s$ (Lemma \ref{lemma2}) and matrix $\MTS$ as in \eqref{eq:lemma3_construction} is constructed for $r > s$ (Lemma \ref{lemma3}). 
In both cases we require $n$ to be sufficiently large in terms of $r$ and $s$.

For the case $r\leq s$, consider $\MO$ and $\MOS$ of the following general form
\begin{equation}\label{eq:lemma1_construction}
	\MO = \begin{pmatrix}
		0_{r,r} & A \\
        B & 0_{n-r,n-r} \\
	\end{pmatrix}, \qquad 
	\MOS = \begin{pmatrix}
		0_{r,r} & A \\
		B & \epsilon B\,A
	\end{pmatrix},
\end{equation}
where $A, B^T\in\pR^{r\times (n-r)}$ and $0_{k,k}$ denotes the $k\times k$ matrix with all zero entries. 
These constructed matrices satisfy the following properties. 
\begin{lemma}[General form of $\MO$]\label{lemma1}
Let $\MO$ and $\MOS$ be as defined in \eqref{eq:lemma1_construction}. Then $\MOS\in\LS(r,r)$. Furthermore 
\begin{equation}
\lim_{\epsilon \rightarrow 0} \| \MOS - \MO \|_F =  0. \label{eq:lemma1_convergence}
\end{equation}
\end{lemma}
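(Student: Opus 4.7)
The statement has two parts: the convergence \eqref{eq:lemma1_convergence} and the membership $\MOS \in \LS(r,r)$. I would dispatch convergence first, as it follows immediately from the observation that $\MOS - \MO$ is supported only on the bottom-right $(n-r)\times(n-r)$ block, where it equals $\epsilon BA$; the Frobenius norm is then $\epsilon \|BA\|_F$, which tends to zero with $\epsilon$.

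For the membership claim, the plan is to generalize the decomposition used in the $3\times 3$ example in Section~\ref{sec:example}. I would introduce an $\epsilon^{-1}$ pivot block in the top-left corner, compensate it by an $r$-sparse correction, and absorb the remaining entries into a rank-$r$ matrix. Explicitly, set
\[
L_\epsilon := \begin{pmatrix} \epsilon^{-1} I_r & A \\ B & \epsilon B A \end{pmatrix}, \qquad S_\epsilon := \begin{pmatrix} -\epsilon^{-1} I_r & 0 \\ 0 & 0 \end{pmatrix}.
\]
Then $L_\epsilon + S_\epsilon = \MOS$ by construction and $\|S_\epsilon\|_0 = r$, so only the rank bound on $L_\epsilon$ remains.

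The rank bound I would verify by showing every row of $L_\epsilon$ lies in the row span of the top block $P := \begin{pmatrix} \epsilon^{-1} I_r & A \end{pmatrix} \in \pR^{r \times n}$. Denoting the $j$-th row of $B$ by $B_j$, the $j$-th row of the bottom block equals $\begin{pmatrix} B_j & \epsilon B_j A \end{pmatrix} = \epsilon B_j \cdot P$, exhibiting it as a linear combination of the rows of $P$. Hence $\rank(L_\epsilon) \leq r$ and $\MOS \in \LS(r,r)$.

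There is no genuine obstacle here; the entire content reduces to the algebraic identity $\epsilon B_j \cdot P = \begin{pmatrix} B_j & \epsilon B_j A\end{pmatrix}$, which is precisely what makes the $\epsilon^{-1}$ pivot in the top-left generate exactly the cross-term $\epsilon BA$ needed to keep the rank at $r$. The only genuine design choice is the pivot block itself: any invertible $r \times r$ matrix $C$ satisfying $B C^{-1} A = B A$ would work, and $C = I_r$ is the cleanest choice. I expect that the real work in the paper lies not in this lemma but in the subsequent choices of $A$ and $B$ that force $\MO \notin \LS(r,s)$ with $s$ much larger than $r$, which is where the bounds on $(r,s,n)$ in Theorem~\ref{thm:main_intro} will come from.
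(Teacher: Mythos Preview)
Your proposal is correct and essentially identical to the paper's proof: the paper uses the same decomposition $L_\epsilon + S_\epsilon$ you wrote down, displaying the rank bound via the explicit factorization $L_\epsilon = \begin{pmatrix}\epsilon^{-1}I_r\\ B\end{pmatrix}\begin{pmatrix}I_r & \epsilon A\end{pmatrix}$ rather than your equivalent row-span argument, and dismisses the convergence as trivial from the definition. Your closing remark is also accurate: the substance of the section lies in Lemmas~\ref{lemma2} and~\ref{lemma3}, where specific $A,B$ are chosen to force $\MO\notin\LS_n(r,s)$.
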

\begin{proof}
We can write $\MOS$ in the form
\begin{equation}
\begin{pmatrix}
    \frac{1}{\epsilon} I_r\\
    B
    \end{pmatrix}
    \begin{pmatrix}
    I_r & \epsilon A
    \end{pmatrix} + 
    \begin{pmatrix}
    	-\frac{1}{\epsilon} I_r & 0\\
        0 & 0
    \end{pmatrix},
\end{equation}
which shows that $\MOS\in\LS_n(r,r)$. It also follows trivially from the definition \eqref{eq:lemma1_construction} that \eqref{eq:lemma1_convergence} is satisfied.
\end{proof}

\begin{remark}[Nested property of $\LS(r,s)$ sets]
Note that $\LS(r,s)$ sets form a partially ordered set
\begin{equation}
	\LS(r,s) \subseteq \LS(r',s'),
\end{equation}
for any $r'\geq r$ and $s'\geq s$.
As a consequence $\MOS\in\LS_n(r,r)$ implies that also $\MOS\in\LS_n(r,s)$ for $s\geq r$.
\end{remark}

With Lemma \ref{lemma1} we give the general form of $\MO$ and $\MOS$ such that $\MOS\in\LS_n(r,s)$ for $s\geq r$. 
It remains to show that, for a more specific choice of $A$ and $B$, we also have $\MO\not\in\LS_n(r,s)$. In particular, we construct $\MO$ and $\MOS$ as follows. 
\begin{equation}
\begin{gathered}\label{eq:lemma2_construction}
\MO = \begin{pmatrix}
	0_{r,r} & \beta & A^{(1)} & \ldots & A^{(l)} \\
    \alpha^T & 0_{k,k}  & \ldots & \ldots & 0_{k,r}\\
    B^{(1)} & \vdots & \ddots & & \vdots\\
    \vdots & \vdots & & \ddots & \vdots \\
    B^{(l)} & 0_{r,k} & \ldots & \ldots & 0_{r,r} 
    \end{pmatrix}, \vspace{0.2em}\\
	\MOS = \begin{pmatrix}
		0_{r,r} & \beta & A^{(1)} & \ldots & A^{(l)} \\
   		\alpha^T & \epsilon  \alpha^T \beta  & \ldots & \ldots & \epsilon\alpha^TA^{(l)} \\
    		B^{(1)} & \vdots & \ddots & & \vdots\\
    		\vdots & \vdots & & \ddots & \vdots \\
   		B^{(l)} & \epsilon B^{(l)}\beta & \ldots & \ldots & \epsilon B^{(l)}A^{(l)}\\
	\end{pmatrix},
\end{gathered}
\end{equation}
where $\alpha,\beta\in\pR^{r \times k}$ are matrices with all non-zero entries, $A^{(i)}, B^{(i)}\in \pR^{r\times r}$ are arbitrary non-singular matrices which may, but need not, be the same, $0_{a,b}$ and $\mathbbm{1}_{a,b}$ denote $a\times b$ matrices with all entries equal to zero or one respectively, and we set $l = \ceil{(s+1)/2}$, $k = \ceil{l/r}$.

By construction, the matrix size is $n = r(l+1) + k$, due to the $l$
matrices $A^{(i)}$ and $B^{(i)}$ for $i=1,\ldots,l$ each being of size $r\times r$, the top left $r\times r$ zero matrix and $k$ columns of $\alpha$ and $\beta$.

\begin{lemma}
\label{lemma2}
$\LS_{n}(r,s)$ is not closed for $1\leq r \leq s$ provided 
\begin{equation}\label{eq:lemma2_size}
	n\geq r\left(\left\lceil \frac{s+1}{2} \right\rceil + 1\right) + \left\lceil \frac{\ceil{(s+1)/2}}{r} \right\rceil.
\end{equation} 
\end{lemma}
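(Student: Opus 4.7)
The plan is to mirror the two-part structure of Lemma~\ref{lemma1}: first show $M_n(\epsilon) \in \LS_n(r,s)$ and $\|M_n(\epsilon) - M_n\|_F \to 0$, then show $M_n \notin \LS_n(r, s)$; the non-closedness conclusion follows immediately.

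For the containment in $\LS_n(r,s)$ I would apply the factorization trick of Lemma~\ref{lemma1} to the concrete construction \eqref{eq:lemma2_construction}: the rank-$r$ outer product of the tall block column $(\epsilon^{-1} I_r;\, \alpha^T;\, B^{(1)};\, \ldots;\, B^{(l)})$ with the wide block row $(I_r,\, \epsilon\beta,\, \epsilon A^{(1)},\, \ldots,\, \epsilon A^{(l)})$ reproduces every $\epsilon$-scaled entry of $M_n(\epsilon)$ and also places $\epsilon^{-1}I_r$ in the upper-left corner, which is cancelled by a sparse corrector of $r$ nonzeros. Thus $M_n(\epsilon) \in \LS_n(r, r) \subseteq \LS_n(r, s)$ whenever $s \geq r$. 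Since every $\epsilon$-scaled entry vanishes as $\epsilon \to 0$, we immediately get the Frobenius convergence to $M_n$.

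The key new step is proving $M_n \notin \LS_n(r, s)$, which I would establish by contradiction. Assuming $M_n = L + S$ with $\rank(L) \leq r$ and $\|S\|_0 \leq s$, I would write $L = PQ^T$ with $P, Q \in \pR^{n \times r}$ and block-decompose $P, Q$ so that $L^{(i,j)} = P_i Q_j^T$ matches the block pattern of $M_n$. The central pigeonhole observation is that the construction contains $2l$ nonzero \emph{side} blocks (the $A^{(j)}$'s at positions $(0, j+1)$ and the $B^{(i)}$'s at $(i+1, 0)$), but $2l \geq s+1 > s$, so at least one entire side block must be clean of $S$. By the transpose symmetry of the construction I may assume without loss of generality that some $B^{(i^*)}$-block is clean, giving $P_{i^*+1} Q_0^T = B^{(i^*)}$ nonsingular, hence both $P_{i^*+1}$ and $Q_0$ have full rank $r$.

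With $Q_0$ and $P_{i^*+1}$ invertible, the remaining zero-block equations linearize into explicit rank bounds that I would chain together. The $(0, 0)$ and $(0, j+1)$ equations yield $\rank(P_0) \leq \|S^{(0, 0)}\|_0$ and $\rank(P_0) \geq r - \|S^{(0, j+1)}\|_0$ respectively (using $A^{(j)}$ nonsingular), while the cross equations $P_{i^*+1} Q_{j+1}^T = -S^{(i^*+1, j+1)}$ give $\rank(Q_{j+1}) \leq \|S^{(i^*+1, j+1)}\|_0$. Combining these produces $\|S^{(i^*+1, j+1)}\|_0 + \|S^{(0, j+1)}\|_0 \geq r$ for every $j = 1, \ldots, l$, whose sum forces $\|S\|_0 \geq lr$, contradicting $\|S\|_0 \leq s$ as soon as $r \geq 2$ since then $lr \geq 2l \geq s+1$. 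The main obstacle will be the border case $r = 1$, where the bare side-block counting only yields $\|S\|_0 \geq l \geq (s+1)/2$; closing this gap I expect to require feeding in the analogous sparsity constraints from the fully nonzero $\alpha$-row and $\beta$-column together with the choice $k = \lceil l/r \rceil$ (so that $rk \geq l$), propagating through the $(1, \cdot)$ and $(\cdot, 1)$ zero blocks to push the total nonzero count of $S$ strictly above $s$.
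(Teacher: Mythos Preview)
Your approach is genuinely different from the paper's, and for $r\geq 2$ it is both correct and sharper: the chain $\|S^{(i^*+1,j+1)}\|_0 \geq \rank(Q_{j+1}) \geq r - \|S^{(0,j+1)}\|_0$ summed over $j$ really does give $\|S\|_0 \geq lr \geq 2l \geq s+1$, and remarkably it never touches the $\alpha,\beta$ blocks. The paper instead argues directly on $\Rig(M_n,r)$ via $2l$ explicit $(r+1)\times(r+1)$ minors
\[
C_i = \begin{pmatrix} 0_{r,1} & A^{(i)} \\ \alpha_i & 0_{1,r} \end{pmatrix}
\qquad\text{and}\qquad
C_{l+i} = \begin{pmatrix} 0_{1,r} & \beta_i \\ B^{(i)} & 0_{r,1} \end{pmatrix},\qquad i=1,\ldots,l,
\]
where the $\alpha_i,\beta_i$ are \emph{distinct} entries of $\alpha,\beta$ (this is exactly why $k=\lceil l/r\rceil$ appears: it guarantees $\alpha,\beta$ have at least $rk\geq l$ entries to draw from). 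A cofactor expansion along the last row shows that perturbing only the shared $0_{r,1}$ column leaves $\det(C_i+S_i)=\pm\alpha_i\det A^{(i)}\neq 0$, so each $C_i$ forces a nonzero of $S$ into a region disjoint from all other $C_j$'s, yielding $\Rig(M_n,r)\geq 2l$ uniformly in $r\geq 1$.

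The genuine gap in your proposal is the case $r=1$, which you acknowledge but do not close. Your block-rank inequalities collapse to $\|S\|_0\geq l\approx s/2$, and the suggestion to ``propagate through the $(1,\cdot)$ and $(\cdot,1)$ zero blocks'' is not yet an argument. Indeed for $r=1$ one has $k=l$, and the entries of $\alpha,\beta$ are structurally indistinguishable from the $A^{(j)},B^{(i)}$: all of them are just $1\times 1$ nonzeros sitting in row~$0$ or column~$0$ of $M_n$, so there is no additional leverage to be extracted from $\alpha,\beta$ separately. A working $r=1$ argument along your lines requires something qualitatively different---for instance writing $L=uv^T$, splitting on whether $u_0$ or $v_0$ vanishes, and in the remaining case counting the $ab$ forced nonzeros in the lower-right zero block against the $(2l-a)+(2l-b)$ nonzeros needed in row~$0$ and column~$0$---whereas the paper's minor argument handles $r=1$ with no extra case analysis.
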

\begin{proof} 
Take $\MO$ as in \eqref{eq:lemma2_construction}. 
By Lemma \ref{lemma1} there exists a matrix sequence $\MOS\in\LS_{n}(r,r)$ such that $\| \MOS - \MO \|_F \rightarrow 0$ as $\epsilon \rightarrow 0$. 
Since for $r\leq s$ we have $\LS_n(r,r)\subseteq \LS_n(r,s)$, it follows also that $\MOS \in \LS_n(r,s)$. 

It remains to prove that $\MO\not\in\LS_n(r,s)$, which is equivalent to showing $\Rig(\MO, r) > s$. 
We show that having a sparse component $\|S\|_0 \leq s$ is insufficient for $\rank(\MO-S)\leq r$, because for any choice of such $S$ with at most $s$ non-zero entries, the matrix $\MO - S$ must have a $(r+1)\times(r+1)$ minor with nonzero determinant implying $\rank(\MO-S)\geq r+1$.

In order to establish $\rank(\MO-S)\geq r+1$ we consider $2l$ minors of $\MO$ each of size $(r+1)\times (r+1)$. 
For $l$ of these we select minors that include $A^{(i)}$, $i = 1, \ldots, l$, along with an additional column from the first $r$ columns and an additional row entry from row index $r+1$ to $k+r$ from $\MO$; and for the remaining $l$ minors we similarly choose a $B^{(i)}$ and an additional row and column as before.

These minors $C_i$ are constructed as \eqref{eq:lemma2_minors}
\begin{align}
	C_i = \begin{cases}
		\begin{pmatrix}
		 0_{r,1} & A^{(i)}  \\
		  \alpha_i & 0_{1,r} \\
	\end{pmatrix}, \quad i = 1, \,\ldots, \,l, \vspace{0.5em} \\
	\begin{pmatrix}
		0_{1,r} &  \beta_{i-l}    \\
		B^{(i-l)} & 0_{r,1} \\
	\end{pmatrix}, \quad i = l+1, \,\ldots, \,2l,
	\end{cases} \label{eq:lemma2_minors}
\end{align}
where $0_{u,v}$ denotes the $u\times v$ matrix with all entries equal
to zero, $\alpha_i,\beta_i\ne 0$ are chosen to be different entries
from $\alpha, \beta\in\pR^{r\times k}$ for each $i = 1, \ldots, l$
with $k=\lceil l/r\rceil$, and $A^{(i)}, B^{(i)}$ are each full rank.
Note that matrices $C_i$ do not have disjoint supports as they share the left $r$ zero entries in the first row of $C_i$ for $i = 1,\,\ldots,\, l$ and the top $r$ zero entries in the first column of $C_i$ for $i = (l+1,\,\ldots,\, 2l)$. 
We refer to these entries as the {\it intersecting part} of $C_i$.

The $S$ such that $\rank(\MO - S) = r$ must have at least $2l$
nonzeros, thus $\Rig(\MO, r)\geq 2l$, by noting that although the $C_i$ have intersecting portions, $S$ restricted to the $i^{th}$ subminor associated with $C_i$ will have at least one distinct nonzero per $i$. 
Consider the $C_i$ for $i = 1, \ldots, l$ associated with $\alpha_i$ and $A^{(i)}$ and let $S_i$ be the corresponding $(r+1)\times (r+1)$ sparsity mask of $S$. 
It follows that $S_i$ must have at least one entry in the non-intersecting set otherwise $C_i + S_i$ is of the form
\begin{equation}
		C_i+S_i = \begin{vmatrix}
		 \vline & &  &  \\
		 s_i & &  A^{(i)}&   \\
		\vline & &  &   \\
		 \alpha_i &  0 & \ldots & 0 \,
	\end{vmatrix} = \alpha_i |A^{(i)}| \neq 0,		\label{eq:lemma2_intersect}
\end{equation}
which is insufficient for the rank of $C_i$ to become rank deficient; similarly for $i = l+1, \ldots, 2l$.

With $\Rig(\MO, r) \geq 2l$ we set $l = \ceil{(s+1)/2}$, which then
implies that $\MO \not\in\LS_n(r,s)$ and by the construction of $\MO$ 
\begin{equation} \label{eq:lemma2_size_old}
	 n \geq r(l + 1) + k.
\end{equation}
Substituting $l = \ceil{(s+1)/2}$ and $k = \ceil{l/r}$, we conclude that $\LS_n(r,s)$ is not a closed set for $s \geq r \geq 1$ provided 
\begin{equation}
n \geq r\left(\left\lceil \frac{s+1}{2} \right\rceil + 1\right) + \left\lceil \frac{\ceil{(s+1)/2}}{r} \right\rceil.
\end{equation}
\vspace{0.2em}
\end{proof}

Turning to the $r>s$ case, we now build upon Lemma \ref{lemma3} by constructing matrices $\MT$ and $\MTS$ as
\begin{equation}\label{eq:lemma3_construction}
	\begin{gathered}
	\MT = \begin{pmatrix}
    	\MTH & 0 & \ldots & 0 \\
    	0 & \MTN^{(1,1)} & \ldots & \MTN^{(1,s+1)}  \\ 
    	\vdots & \vdots & \ddots & \\
    	0 & \MTN^{(s+1,1)} &  & \MTN^{(s+1,s+1)} 
    \end{pmatrix} = \begin{pmatrix}
    	\MTH& 0_{n',\,(s+1)(r-s)}  \\
    	0_{(s+1)(r-s),\,n'} & \MTN
    \end{pmatrix} \vspace{0.2em} \\
    \MTS =  \begin{pmatrix}
    	\MTHS & 0_{n',\,(s+1)(r-s)}  \\
    	0_{(s+1)(r-s),\,n'} & \MTN
    \end{pmatrix}
    \end{gathered},
\end{equation}
where $\MTN^{(i,j)}\in\pR^{(r-s)\times (r-s)}$ are identical full rank matrices and  
\begin{equation}\label{eq:lemma3_construction2}
	\MTH= \begin{pmatrix}
	0_{s,s} & \beta & A^{(1)} & \ldots & A^{(l)} \\
    	\alpha^T & 0  & \ldots & \ldots & 0_{1,s}\\
    	B^{(1)} & \vdots & \ddots & & \vdots\\
    	\vdots & \vdots & & \ddots & \vdots \\
    	B^{(l)} & 0_{s,1} & \ldots & \ldots & 0_{s,s} \\
\end{pmatrix}, \quad
\MTHS =  \begin{pmatrix}
	0_{s,s} & \beta & A^{(1)} & \ldots & A^{(l)} \\
   	 \alpha^T & \epsilon  \alpha^T \beta  & \ldots & \ldots & \epsilon\alpha^TA^{(l)} \\
   	 B^{(1)} & \vdots & \ddots & & \vdots\\
  	  \vdots & \vdots & & \ddots & \vdots \\
    	B^{(l)} & \epsilon B^{(l)}\beta & \ldots & \ldots & \epsilon B^{(l)}A^{(l)}	\end{pmatrix},
\end{equation}
have the same structure as in \eqref{eq:lemma2_construction} but with $r$ replaced by $s$ and as a result $A^{(i,j)}, B^{(i,j)}\in\pR^{s\times s}$, $\alpha, \beta \in\pR^s$, $l = \ceil{(s+1)/2}$, so $\MTH\not\in\LS_{n'}(s,s)$ while $\MTHS\in\LS_{n'}(s,s)$.

By construction, the size of $\MTH$ is $n' = s(l+1) + 1$ and the size of $\MT$ is $n= n' + (s+1)(r-s)$.

\begin{lemma}
\label{lemma3} 
$\LS_{n}(r,s)$ is not closed for $r>s\geq 1$ provided 
\begin{equation} \label{eq:lemma3_size}
n \geq s\left(\left\lceil \frac{s+1}{2} \right\rceil + 1\right)+ 1 +(s+1)(r-s).
\end{equation}
\end{lemma}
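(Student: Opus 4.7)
The plan is to mirror the strategy of Lemma~\ref{lemma2}, leveraging the block-diagonal construction $\MT=\diag(\MTH,\MTN)$ to import the rigidity obstruction from $\MTH$ while accommodating the extra rank $r-s$ via $\MTN$. First I would dispatch the two easy ingredients: $\MTS\in\LS_n(r,s)$ and $\|\MTS-\MT\|_F\to 0$. For the membership, Lemma~\ref{lemma1} applied with $r$ replaced by $s$ (in the general form \eqref{eq:lemma1_construction}) gives $\MTHS\in\LS_{n'}(s,s)$, say $\MTHS=L_\epsilon+S_\epsilon$; and $\MTN=\mathbbm{1}_{s+1,s+1}\otimes C$ has rank $r-s$, where $C$ denotes the common full rank $(r-s)\times(r-s)$ block. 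Hence $\MTS=\diag(L_\epsilon,\MTN)+\diag(S_\epsilon,0)$ is a rank-$r$ plus $s$-sparse sum. The Frobenius convergence reduces to $\|\MTHS-\MTH\|_F\to 0$, already supplied by Lemma~\ref{lemma1}.

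The substantive step is to prove $\MT\notin\LS_n(r,s)$. I would argue by contradiction: suppose $\MT-S$ has rank at most $r$ for some $S$ with $\|S\|_0\le s$, and split $S$ into blocks $S_{11},S_{12},S_{21},S_{22}$ conforming to the block partition of $\MT$. Because $\|S_{11}\|_0\le s$, Lemma~\ref{lemma2} applied with $r$ replaced by $s$ gives $\MTH\notin\LS_{n'}(s,s)$, so $\rank(\MTH-S_{11})\ge s+1$; consequently there exist index sets $R_1,C_1\subseteq\{1,\ldots,n'\}$ of size $s+1$ such that the submatrix of $\MTH-S_{11}$ at rows $R_1$ and columns $C_1$ is nonsingular.

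The heart of the argument is a double pigeonhole over the $(s+1)\times(s+1)$ block partition of the lower-right region of $\MT$. Since $\|S_{12}\|_0+\|S_{22}\|_0\le s$ and these nonzeros occupy at most $s$ of the $s+1$ block columns, some block column $j^\star$ contains no $S_{12}$- or $S_{22}$-nonzeros; symmetrically, $\|S_{21}\|_0+\|S_{22}\|_0\le s$ produces a block row $j_\star$ free of $S_{21}$- and $S_{22}$-nonzeros. Letting $R_2,C_2$ denote the row and column indices of the $(j_\star,j^\star)$ block of $\MTN$, the restrictions of $S_{12}$, $S_{21}$, and $S_{22}$ to the rectangles $R_1\times C_2$, $R_2\times C_1$, and $R_2\times C_2$ all vanish, while $\MTN$ restricted to $R_2\times C_2$ equals the full rank matrix $C$.

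Assembling these ingredients, the $(r+1)\times(r+1)$ submatrix of $\MT-S$ at rows $R_1\cup R_2$ and columns $C_1\cup C_2$ is block-diagonal with the nonsingular $(s+1)\times(s+1)$ block on one diagonal and $C$ on the other, so its determinant is a product of two nonzero determinants; this forces $\rank(\MT-S)\ge r+1$, contradicting the assumption. The principal obstacle is arranging $j_\star$ and $j^\star$ so that the $S_{12}$, $S_{21}$, and $S_{22}$ contributions vanish simultaneously on a single $(r+1)\times(r+1)$ window, which is precisely why the construction inflates $\MTN$ to have $(s+1)^2$ identical blocks; once this pigeonhole is in place, the determinantal step is immediate.
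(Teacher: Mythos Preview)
Your argument is correct, and the easy half (membership $\MTS\in\LS_n(r,s)$ and convergence) matches the paper exactly. For the hard half, $\MT\notin\LS_n(r,s)$, you take a genuinely different route from the paper, though with the same endgame of exhibiting a nonsingular $(r+1)\times(r+1)$ submatrix of $\MT-S$.

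The paper does not black-box Lemma~\ref{lemma2}. Instead it re-runs the minor-counting argument from that proof: it fixes $s+1$ explicit minors $D_i=\diag(\hat C_i,\MTN^{(i,i)})$, where the $\hat C_i$ are the $(s+1)\times(s+1)$ minors of $\MTH$ from \eqref{eq:lemma2_minors} and the second block is always taken from the \emph{diagonal} of $\MTN$, and then argues (as in Lemma~\ref{lemma2}) that each $D_i$ forces a distinct nonzero of $S$ in a non-intersecting location, giving $\|S\|_0\ge s+1$. You instead invoke the conclusion $\Rig(\MTH,s)>s$ once to extract a single nonsingular $(s+1)\times(s+1)$ block of $\MTH-S_{11}$, and then run a separate double pigeonhole over the $(s+1)\times(s+1)$ grid of blocks in $\MTN$ to locate a clean off-diagonal block $\MTN^{(j_\star,j^\star)}$. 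Your version is more modular---it treats Lemma~\ref{lemma2} as a black box and produces a single explicit minor rather than a family---and it makes clearer why $\MTN$ is inflated to $(s+1)^2$ identical blocks rather than just $s+1$. The paper's version, by contrast, only ever uses the diagonal blocks $\MTN^{(i,i)}$ in the rigidity step (the full Kronecker structure is needed only to keep $\rank(\MTN)=r-s$), at the cost of re-opening the internals of Lemma~\ref{lemma2}'s proof. One small wording point: what you are really citing is the rigidity claim $\MTH\notin\LS_{n'}(s,s)$ established \emph{inside} the proof of Lemma~\ref{lemma2} (with $r$ set equal to $s$), not the non-closedness statement of Lemma~\ref{lemma2} itself.
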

\begin{proof}
Consider $\MT$ and $\MTS$ from \eqref{eq:lemma3_construction}. 
By additivity of rank for block diagonal matrices, $\rank\left(\MTN \right) = (r-s)$ and $\MTHS\in\LS_{n'}(s,s)$, we have that $\MTS\in\LS_{n}(r, s)$.

That $\MT\not\in\LS_n(r, s)$ follows from $\Rig(\MT, r) > s$ which 
we show via $\| S\|_0\leq s$ being insufficient for $\rank(\MT - S) \leq r$, because for any such $S$, matrix $(\MT - S)$ must have at least one $(r + 1)\times (r +1)$ minor with non-zero determinant, implying $\rank(\MT - S) \geq r +1$.

We consider minors $D_i$ of size $(r + 1)\times (r +1)$ by diagonally appending a minor $\hat{C}_i\in\pR^{(s+1) \times (s+1)}$ of $\MTH$ of a similar structure as in \eqref{eq:lemma2_minors} and the whole $i^{th}$ diagonal block $\MTN^{(i,i)}\in\pR^{(r-s)\times (r-s)}$ 
\begin{equation}
\begin{gathered}
	D_i = \begin{pmatrix}
    		\hat{C}_i & 0 \\
        		0 & \MTN^{(i,i)}
    	\end{pmatrix}, \qquad i = 1, \,\ldots, \,s+1.
\end{gathered}
\end{equation}

The intersecting supports between $D_i$ come from the
  intersecting parts between individual $\hat{C}_i$ as explained in
  \eqref{eq:lemma2_minors} in the proof of Lemma \ref{lemma2} due to
  matrices $\MTN^{(i,i)}$ being selected from the block diagonal.
In order that $\rank{(D_i)}\leq r$ requires $S_i$ to have at least one non-zero in a part of $D_i$ that is disjoint from $D_j$ for $j\neq i$. 
Either $S_i$ has at least one non-zero on a zero block or $\MTN^{(i,j)}$ or $\hat{C}_i$. 
If the non-zero is in a zero block or $\MTN^{(i,j)}$, then these are disjoint which implies at least $s+1$ non-zero entries. 
On the other hand, if the non-zero is in $\hat{C}^{(i)}$ then at least one entry of $\MTN$ must be changed in the non-intersecting part of $\hat{C}_i$ as argued following equation \eqref{eq:lemma2_minors}. 
Therefore for every $D_i$ at least one distinct entry per $i$ must be
changed using the corresponding sparsity component $S_i$, and since $i
= 1, \ldots, s+1$, we must also change at least $s+1$ entries of
$\MT$; that is $\Rig(\MT, r)\geq s+1$.

By the construction of $\MT$ in this argument we have
\begin{equation}
n \geq \underbrace{s(l+1)+ 1}_{n'\text{, size of } \MTH} +\underbrace{(s+1)(r-s)}_{\text{size of }\mathbbm{1}_{s+1} \otimes N},	\label{eq:lemma3-size}
\end{equation}
where the size of $\MTH$ comes from $l$ times repeating the matrices $A^{(i)}$ and $B^{(i)}$ each of size $s\times s$, the top left $s\times s$ matrix $0_{s,s}$, the $\beta$ column and $\alpha$ row respectively and $s+1$ times repeating matrix $\MTN$ of size $(r-s)$. By zero padding of the matrix we can arbitrarily increase its size.
Substituting $l= \ceil{(s+1)/2}$ gives that $\LS_n(r,s)$ is not a closed set for $r > s$ provided 
\begin{equation}
n \geq s\left(\left\lceil \frac{s+1}{2} \right\rceil + 1\right)+ 1 +(s+1)(r-s).
\end{equation}
\end{proof}

The following theorem gives a sufficient lower bound on the matrix size such that both size requirements derived in Lemma \ref{lemma2} and Lemma \ref{lemma3} are met, thus unifying both results.
\begin{theorem} \label{thm:main}
The low-rank plus sparse set $\LS_n(r,s)$ is not closed provided $n\ge
(r+1)(s+2)$ and $r\geq 1$, $s\geq1$.
\end{theorem}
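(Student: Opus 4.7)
The plan is to derive Theorem \ref{thm:main} as a direct consequence of Lemmas \ref{lemma2} and \ref{lemma3}, by showing that the single, cleaner lower bound $n \ge (r+1)(s+2)$ dominates both of the size thresholds appearing in those two lemmas. Since Lemma \ref{lemma2} covers the regime $1 \le r \le s$ and Lemma \ref{lemma3} covers $r > s \ge 1$, these two cases exhaust the parameter range of Theorem \ref{thm:main}, so it suffices to verify the implication $n \ge (r+1)(s+2) \Rightarrow n \ge (\text{bound of the relevant lemma})$ separately in each regime.

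For the case $r \le s$, I would start from $(r+1)(s+2) = r(s+2)+(s+2)$ and compare against the right-hand side of \eqref{eq:lemma2_size}. Using the elementary bound $\lceil (s+1)/2\rceil \le (s+2)/2$ (with equality when $s$ is even), one gets
\begin{equation*}
r\!\left(\left\lceil \tfrac{s+1}{2} \right\rceil + 1\right) \le r\!\left(\tfrac{s+2}{2} + 1\right) = \tfrac{r(s+4)}{2},
\end{equation*}
and similarly $\lceil \lceil (s+1)/2\rceil / r \rceil \le (s+2)/(2r) + 1$. After substituting these estimates, the inequality $(r+1)(s+2) \ge r\!\left(\left\lceil \tfrac{s+1}{2} \right\rceil + 1\right) + \left\lceil \tfrac{\lceil(s+1)/2\rceil}{r} \right\rceil$ reduces, for $r \ge 1$ and $s \ge 1$, to a routine inequality between polynomials in $r$ and $s$ that holds with considerable slack.

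For the case $r > s$, I would compare $(r+1)(s+2) = rs + 2r + s + 2$ with the right-hand side of \eqref{eq:lemma3_size}. Expanding,
\begin{equation*}
s\!\left(\left\lceil \tfrac{s+1}{2} \right\rceil + 1\right) + 1 + (s+1)(r-s) = s\!\left\lceil \tfrac{s+1}{2} \right\rceil + 1 + (s+1)r - s^2.
\end{equation*}
Subtracting this from $(r+1)(s+2)$ and simplifying, the desired inequality reduces to $r + s + 1 \ge s\bigl(\lceil (s+1)/2\rceil - s\bigr)$. The bound $\lceil (s+1)/2\rceil \le (s+2)/2 \le s$ for $s \ge 2$ (and a direct check for $s = 1$) makes the right-hand side non-positive, so the inequality holds trivially.

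I do not anticipate a genuine obstacle here; the only slightly delicate point is being careful with the ceiling functions in the Lemma \ref{lemma2} bound, since a crude estimate could destroy an integer margin. The proof is essentially a short case analysis followed by arithmetic, after which Lemma \ref{lemma2} or Lemma \ref{lemma3} delivers a matrix $\MO$ or $\MT$ of the required size $n$ (zero-padding to enlarge $n$ beyond the minimum is explicitly allowed by the constructions in \eqref{eq:lemma2_construction} and \eqref{eq:lemma3_construction}) together with a sequence $\MOS$ or $\MTS$ in $\LS_n(r,s)$ converging to it, witnessing non-closedness.
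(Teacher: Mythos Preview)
Your proposal is correct and follows essentially the same approach as the paper: reduce to Lemmas~\ref{lemma2} and~\ref{lemma3} by verifying that $(r+1)(s+2)$ dominates each lemma's size threshold via elementary ceiling estimates. The only cosmetic difference is that you use the sharper bound $\lceil (s+1)/2\rceil \le (s+2)/2$ whereas the paper uses the cruder $\lceil x\rceil \le x+1$, but the structure and conclusion are identical.
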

\begin{proof}
Suppose $n \geq (r+1)(s+2)$. We show that this is a sufficient condition for the matrix size requirements in \eqref{eq:lemma2_size} in Lemma \ref{lemma2} and \eqref{eq:lemma3_size} in Lemma \ref{lemma3} to hold.

We first obtain a sufficient condition on the matrix size in \eqref{eq:lemma2_size} in Lemma \ref{lemma2}, bounding
\begin{align}
	&\,r \left( \left\lceil \frac{s+1}{2} + 1 \right\rceil  \right) +  \left\lceil \frac{\ceil{(s+1)/2}}{r} \right\rceil \nonumber\\
	\leq &\,r\left(\frac{s+1}{2}+2\right) +  \left(\frac{1}{r}\right)\left(\frac{s+1}{2}+1\right) +1 \nonumber\\
	\leq&\,r \left( \frac{s+5}{2} \right) + \left(\frac{s+5}{2}\right) = (r+1)\left(\frac{s+5}{2}\right) \nonumber\\
	\leq&\,(r+1)(s+2),	\label{eq:lemma2_inequality}
\end{align}
where the first inequality in \eqref{eq:lemma2_inequality} comes from an upper bound on the ceiling function $\ceil{x} \leq x+1$, the second inequality follows from $r\geq 1$ and the last inequality holds for $s\geq 1$.

We also obtain a sufficient bound condition on the matrix size in \eqref{eq:lemma3_size} in Lemma \ref{lemma3} of the form
\begin{align}
	&\,s\left( \left\lceil \frac{s+1}{2} + 1 \right\rceil  \right) + 1 + (s+1)(r-s)\nonumber\\
	\leq &\,s\left(\frac{s+1}{2}+2\right) + (s+1)(r-s) = -\frac{s^2}{2} + \frac{3}{2} + rs +1 \nonumber\\
	\leq&\,(r+1)(s+1) \leq (r+1)(s+2). \label{eq:lemma3_inequality}
\end{align}
The first inequality in \eqref{eq:lemma3_inequality} comes from an upper bound on the ceiling function and the second inequality holds for $s\geq1$.

Combining \eqref{eq:lemma2_inequality}, \eqref{eq:lemma3_inequality} with Lemma \ref{lemma2} and Lemma \ref{lemma3} gives that $\LS_{n}(r,s)$ is not a closed set for $n\geq (r+1)(s+2)$ and $r\geq1$, $s\geq1$.
\end{proof}

\subsection{Quadratic sparsity}

Note that the condition $n \geq (r+1)(s+1)$ limits the order of $r$ and $s$; in particular if $r = \bigO(n^l)$ then $s = \bigO(n^{1-l})$ which for $l \geq 0$ constrains $s$ to be at most linear in $n$, $s=\bigO(n)$.  In Lemma \ref{lemma4} and Lemma \ref{lemma5}, we extend the result so that for $r = \bigO(n^l)$ and $l\leq1/2$ we obtain $s = \bigO(n^{2-3l})$ which for constant rank, $l = 0$, allows $s$ to be quadratic $\bigO(n^2)$.

Lemma \ref{lemma4} establishes a lower bound on the rigidity of block matrices in terms of the rigidity of a single block. Lemma \ref{lemma5} shows that the sequence $K(\epsilon)$ converging to $K$ is an example of $\LS_{n}(r,p^2r)$ not being closed provided $n \geq p\left( r \left( \left\lceil \frac{r+1}{2}\right\rceil + 1\right) + 1\right) $. Let
\begin{equation}\label{eq:lemma4_construction}
	K = \begin{pmatrix}
		\MTH^{(1,1)}&  \cdots & \MTH^{(1,p)}   \\
		\vdots  & \ddots & \vdots \\
		\MTH^{(p,1)}& \cdots & \MTH^{(p,p)}
	\end{pmatrix}, \quad 
	K(\epsilon) = \begin{pmatrix}
		\hat{M}^{(1,1)}_{n'}(\epsilon) &  \cdots & \hat{M}^{(1,p)}_{n'}(\epsilon)    \\
		\vdots  & \ddots & \vdots \\
		\hat{M}^{(p,1)}_{n'}(\epsilon) & \cdots & \hat{M}^{(p,p)}_{n'}(\epsilon)
	\end{pmatrix}
\end{equation}
where matrices $\hat{M}^{(i,j)}_{n'}(\epsilon)\in\LS_{n'}(r,r)$ and $\MTH^{(i,j)}\not\in\LS_{n'}(r,r)$ are of the same structure as in \eqref{eq:lemma3_construction2} and $\lim_{\epsilon\rightarrow 0} K(\epsilon) = K$ where $K\in\pR^{(n'p)\times (n'p)}$ is constructed by repeating $\MTH$ in $p$ row and column blocks.

\begin{lemma}
\label{lemma4}
For $K$ as in \eqref{eq:lemma4_construction}
\begin{equation}
	\Rig(K,r) \geq p^2 \Rig(\MTH, r).
\end{equation}
\end{lemma}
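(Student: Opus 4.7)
The plan is to exploit the fact that $K$ is partitioned into $p^2$ disjoint copies of $\hat{M}_{n'}$, together with the elementary observation that submatrices of a low-rank matrix inherit the rank bound. This reduces the rigidity bound for $K$ to a blockwise accounting argument, with no need to revisit the minor-based construction used in Lemmas \ref{lemma2} and \ref{lemma3}.

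First I would fix an arbitrary sparse $S\in\pR^{(n'p)\times (n'p)}$ satisfying $\rank(K - S)\leq r$ and partition $S$ conformally with $K$ into blocks $S^{(i,j)}\in\pR^{n'\times n'}$ for $i,j=1,\ldots,p$. Since each $(i,j)$-block of $K-S$ is a submatrix of $K-S$ and submatrices can only decrease rank, we have $\rank\bigl(\hat{M}_{n'} - S^{(i,j)}\bigr) \leq \rank(K - S) \leq r$ for every pair $(i,j)$. By the definition of the rigidity function this forces
\begin{equation*}
\|S^{(i,j)}\|_0 \,\geq\, \Rig(\hat{M}_{n'}, r) \qquad \text{for all } i,j=1,\ldots,p.
\end{equation*}

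Next I would use that the block supports partition the entries of the full matrix, so the $\ell_0$ norms add exactly:
\begin{equation*}
\|S\|_0 \,=\, \sum_{i=1}^{p}\sum_{j=1}^{p} \|S^{(i,j)}\|_0 \,\geq\, p^2\,\Rig(\hat{M}_{n'}, r).
\end{equation*}
Taking the infimum over all admissible $S$ yields $\Rig(K,r) \geq p^2\,\Rig(\hat{M}_{n'},r)$, which is the desired inequality.

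Since the argument is a short structural observation rather than a combinatorial construction, there is no real obstacle: the only subtlety to flag explicitly is the submatrix-rank step, which is what allows a global low-rank constraint on $K-S$ to be propagated to each block independently, thus making the lower bounds provided by the minor-based analysis of $\hat{M}_{n'}$ additive across the $p^2$ blocks.
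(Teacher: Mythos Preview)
Your proof is correct and follows essentially the same approach as the paper: partition an optimal (or arbitrary admissible) $S$ conformally with the block structure of $K$, use the submatrix-rank inequality to force each block $S^{(i,j)}$ to satisfy $\|S^{(i,j)}\|_0\geq\Rig(\hat{M}_{n'},r)$, and sum over the $p^2$ blocks. The only difference is cosmetic---you quantify over all admissible $S$ and take an infimum, whereas the paper works directly with an optimal $S$.
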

\begin{proof}
Let $S$ be the sparsity matrix corresponding to $\Rig(K,r)$, such that
\begin{equation}
\begin{gathered}
\rank(K - S) \leq r, \qquad \|S\|_0 = \Rig(K,r), \\
\text{and} \qquad S = \begin{pmatrix}
		\hat{S}^{(1,1)}&  \cdots & \hat{S}^{(1,p)}   \\
		\vdots  & \ddots & \vdots \\
		\hat{S}^{(p,1)}& \cdots & \hat{S}^{(p,p)}
	\end{pmatrix},
\end{gathered}
\end{equation}
where $\hat{S}^{(i,j)}\in\pR^{n' \times n'}$ denotes the sparsity matrix used in the place of the $\MTH^{(i,j)}$ block.
A necessary condition for $\rank(K - S) \leq r$ is that also the rank of individual blocks is less than or equal to $r$, that is
\begin{equation}
	\rank(\MTH - \hat{S}^{(i,j)}) \leq r, \qquad \forall i,j\in\left\{1, \ldots, p \right\}.
\end{equation}
By definition of the rigidity function as the minimal sparsity of $S$ such that $\rank(\MTH - S)\leq r$, we have that
\begin{equation}
	\|\hat{S}^{(i,j)}\|_0 \geq \Rig(\MTH, r).
\end{equation}
Summing over all blocks $i,j\in\left\{1, \ldots, p \right\}$ yields the result
\begin{equation}
	\|S\|_0 = \sum_{i,j}^{p,p} \|\hat{S}^{(i,j)}\|_0 \geq \sum_{i,j}^{p,p}\Rig(\MTH, r),
\end{equation}
and consequently that
\begin{equation}
	\Rig(K, r) \geq p^2 \Rig(\MTH, r).	
\end{equation}
\end{proof}

\begin{lemma}\label{lemma5}
The low-rank plus sparse set $\LS_n(r,p^2 r)$ is not closed provided
\begin{equation*}
n \geq p\left( r \left( \left\lceil \frac{r+1}{2}\right\rceil + 1\right) + 1\right)
\end{equation*}
and $r\geq 1$, $p\geq1$.
\end{lemma}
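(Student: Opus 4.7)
My plan is to build the sequence $K(\epsilon)$ and its limit $K$ exactly as in \eqref{eq:lemma4_construction} by choosing every block to be the same: $\MTH^{(i,j)}=\MTH$ and $\hat M_{n'}^{(i,j)}(\epsilon)=\MTHS$ for all $i,j\in\{1,\ldots,p\}$, where $\MTH,\MTHS$ are taken from \eqref{eq:lemma3_construction2} with the inner parameter $s$ set equal to $r$ (so that $l=\lceil(r+1)/2\rceil$ and $n'=r(l+1)+1$). In Kronecker notation this reads $K=\mathbbm{1}_{p,p}\otimes \MTH$ and $K(\epsilon)=\mathbbm{1}_{p,p}\otimes \MTHS$, and convergence $\|K(\epsilon)-K\|_F=p\,\|\MTHS-\MTH\|_F\to 0$ as $\epsilon\to 0$ is immediate from Lemma~\ref{lemma1}.

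The first nontrivial step is to show $K(\epsilon)\in\LS_n(r,p^2 r)$. By Lemma~\ref{lemma1} applied with $A=[\beta,A^{(1)},\ldots,A^{(l)}]$ and $B=[\alpha^T;B^{(1)};\ldots;B^{(l)}]$, one obtains a decomposition $\MTHS=L(\epsilon)+S(\epsilon)$ with $\rank(L(\epsilon))\le r$ and $S(\epsilon)$ supported on the top-left $r\times r$ block, so $\|S(\epsilon)\|_0\le r$. Lifting to the block array,
\begin{equation*}
K(\epsilon)=\bigl(\mathbbm{1}_{p,p}\otimes L(\epsilon)\bigr)+\bigl(\mathbbm{1}_{p,p}\otimes S(\epsilon)\bigr),
\end{equation*}
and the crucial bookkeeping uses the Kronecker identities $\rank(A\otimes B)=\rank(A)\rank(B)$ and $\|A\otimes B\|_0=\|A\|_0\|B\|_0$, which give rank at most $1\cdot r=r$ and sparsity at most $p^2\cdot r=p^2 r$.

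The second step is to show $K\notin\LS_n(r,p^2 r)$, i.e.\ that $\Rig(K,r)>p^2 r$. The block $\MTH$ is precisely the matrix $\MO$ of \eqref{eq:lemma2_construction} with $k=1$ (which is admissible since $l=\lceil(r+1)/2\rceil\le r$ for $r\ge 1$) and with the inner parameter equal to $r$, so the sub-minor argument in the proof of Lemma~\ref{lemma2} yields $\Rig(\MTH,r)\ge 2l\ge r+1$. Combining with Lemma~\ref{lemma4},
\begin{equation*}
\Rig(K,r)\ge p^2\,\Rig(\MTH,r)\ge p^2(r+1)>p^2 r,
\end{equation*}
so $K\notin\LS_n(r,p^2 r)$. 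Since $K$ has size $p n'=p(r(\lceil(r+1)/2\rceil+1)+1)$, this matches the stated lower bound on $n$; strictly larger $n$ is handled by zero padding, which preserves both rank and sparsity of the two components.

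The main obstacle is the rank accounting in the first step. A block-diagonal construction would leave the rank at $pr$, and independently chosen low-rank components across blocks would force rank as large as $p^2 r$; either is too large. Using identical blocks so that the low-rank pieces share a common $r$-dimensional row and column space, captured compactly by the Kronecker factorization with the rank-one all-ones matrix $\mathbbm{1}_{p,p}$, is what keeps the total rank at $r$ while the sparsity budget grows independently to $p^2 r$, enabling the quadratic-in-$p$ sparsity regime.
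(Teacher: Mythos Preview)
Your proof is correct and follows essentially the same approach as the paper: build $K=\mathbbm{1}_{p,p}\otimes\MTH$ and $K(\epsilon)=\mathbbm{1}_{p,p}\otimes\MTHS$, use the rank-one property of $\mathbbm{1}_{p,p}$ to keep $\rank K(\epsilon)\le r$ while the sparsity grows to $p^2r$, and invoke Lemma~\ref{lemma4} together with $\Rig(\MTH,r)>r$ to conclude $K\notin\LS_n(r,p^2r)$. Your explicit use of the Kronecker identities $\rank(A\otimes B)=\rank(A)\rank(B)$ and $\|A\otimes B\|_0=\|A\|_0\|B\|_0$ makes the rank/sparsity bookkeeping cleaner than the paper's more informal phrasing, and your verification that $k=1$ (i.e.\ $l\le r$) is a nice detail the paper leaves implicit.
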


\begin{proof}
Consider $K$ and $K(\epsilon)$ as in \eqref{eq:lemma4_construction}. 
Repeating $\MTH\in\LS_{n'}(r,r)$ $p$ times in row and column blocks does not increase the rank, so $\rank\left(K(\epsilon)\right) = r$ and by additivity of sparsity we have that $K\left(\epsilon\right)\in\LS_{n}(r, p^2r)$. 
By Lemma \ref{lemma4} and $\Rig(\MTH,r) > r$ we have the strict lower bound on the rigidity of $K$ 
\begin{equation}
	\Rig(K, r) \geq p^2 \Rig(\MTH,r) > p^2 r,
\end{equation}
which implies that $K\not\in\LS_n(r, p^2 r)$ while $K(\epsilon)\in\LS_n(r, p^2r)$.

Recall that the size  of $\MTH$ as defined in \eqref{eq:lemma3_construction2} is $n' = r(l+1) + 1$ and, since $\MTH$ is repeated $p$ times, we obtain
\begin{align}\label{eq:lemma5_size}
	n &\geq p\left(r(l+1) + 1\right) = p\left( r \left( \left\lceil \frac{r+1}{2}\right\rceil + 1\right) + 1\right),
\end{align}
where the inequality comes from zero padding of the matrix to arbitrarily expand its size.
\end{proof}

\begin{theorem}\label{lemma6}
The low-rank plus sparse set $\LS_n(r, s)$ is not closed provided
\begin{equation*}
n \geq (r+2)^{3/2}s^{1/2}
\end{equation*}
and $r\geq 1$, and $s$ is of the form $s = p^2 r$ for an integer $p\geq 1$.
\end{theorem}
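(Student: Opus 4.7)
The plan is to deduce Theorem \ref{lemma6} as a direct corollary of Lemma \ref{lemma5}. Since $s = p^2 r$ with $p \geq 1$ an integer, Lemma \ref{lemma5} already furnishes an explicit non-closedness witness $K(\epsilon) \to K$ for $\LS_n(r,s)$, under the size requirement
\begin{equation*}
n \geq p\left( r \left( \left\lceil \tfrac{r+1}{2}\right\rceil + 1\right) + 1\right).
\end{equation*}
So all that remains is to verify that the hypothesis $n \geq (r+2)^{3/2} s^{1/2}$ is at least as strong as this requirement. Since $s = p^2 r$ gives $s^{1/2} = p r^{1/2}$, the hypothesis becomes $n \geq p (r+2)^{3/2} r^{1/2}$, and after dividing through by $p$ it suffices to prove the purely arithmetic inequality
\begin{equation*}
r \left( \left\lceil \tfrac{r+1}{2}\right\rceil + 1\right) + 1 \;\leq\; (r+2)^{3/2} r^{1/2} \qquad \text{for all } r \geq 1.
\end{equation*}

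The key step is to replace the ceiling by the clean upper bound $\lceil (r+1)/2 \rceil \leq (r+2)/2$, which transforms the left-hand side into $r(r+2)/2 + r + 1 = (r^2 + 4r + 2)/2$. Then it suffices to prove $(r^2 + 4r + 2)/2 \leq (r+2)^{3/2} r^{1/2}$. Squaring both positive sides, this is equivalent to
\begin{equation*}
(r^2 + 4r + 2)^2 \;\leq\; 4r(r+2)^3,
\end{equation*}
and I would verify this by expanding: the right-hand side equals $4r^4 + 24r^3 + 48r^2 + 32r$ while the left-hand side equals $r^4 + 8r^3 + 20r^2 + 16r + 4$. Their difference is $3r^4 + 16r^3 + 28r^2 + 16r - 4$, which is clearly positive for every $r \geq 1$ (indeed, evaluating at $r = 1$ already gives $59 > 0$, and the polynomial is strictly increasing thereafter).

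Chaining the two inequalities shows the size requirement of Lemma \ref{lemma5} is met whenever $n \geq (r+2)^{3/2} s^{1/2}$, so the sequence $K(\epsilon)$ from \eqref{eq:lemma4_construction} witnesses the non-closedness of $\LS_n(r,s)$. The argument is essentially bookkeeping --- there is no genuine obstacle --- but the one place to be careful is the squaring step, which only yields equivalence because both sides of $(r^2+4r+2)/2 \leq (r+2)^{3/2} r^{1/2}$ are manifestly nonnegative for $r \geq 1$. One could also remark that the bound $(r+2)^{3/2} s^{1/2}$ is not tight: using Lemma \ref{lemma5} more carefully one obtains a constant-sharper statement, but the form $(r+2)^{3/2} s^{1/2}$ already captures the main scaling advertised in Theorem \ref{thm:main_intro}, namely $s = \mathcal{O}(n^{2-3l})$ when $r = \mathcal{O}(n^l)$ for $l \in [0,1/2]$.
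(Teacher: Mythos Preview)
Your proof is correct and follows essentially the same approach as the paper: both reduce to Lemma~\ref{lemma5} by checking the arithmetic inequality $r(\lceil (r+1)/2\rceil + 1) + 1 \leq (r+2)^{3/2} r^{1/2}$ after substituting $p = (s/r)^{1/2}$. The only differences are cosmetic---you bound the ceiling by $(r+2)/2$ and verify the inequality by squaring and expanding, whereas the paper uses the looser bound $\lceil x\rceil \leq x+1$ and factors through $(r+1)(r+2)/(2\sqrt{r})$.
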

\begin{proof}
We weaken the condition of Lemma \ref{lemma5} and show that it suffices to have $n\geq (r+2)^{3/2}s^{1/2}$ for $\LS_n(r, s)$ not closed by substituting $s = p^2r$
\begin{align}
 	&\,p\left( r \left( \left\lceil \frac{r+1}{2}\right\rceil + 1\right) + 1\right) = \left(\frac{s}{r}\right)^{\frac{1}{2}}\left( r \left( \left\lceil \frac{r+1}{2}\right\rceil + 1\right) + 1\right) \\
 	 \leq&\, s^{1/2} \left(r^{1/2} \left( \frac{r+5}{2}\right) + 1\right) = s^{1/2} \left( \frac{r^{3/2}}{2} + 2r^{1/2} + r^{-1/2} \right) \\
 	 \leq&\, s^{1/2} \left( \frac{r^{3/2}}{2} + 2r^{1/2} + \frac{3}{2}r^{-1/2} \right) =  s^{1/2} \frac{(r+1)(r+2)}{2 \sqrt{r}} \\
 	 \leq&\, s^{1/2}(r + 2)^{3/2},
 \end{align}
where in the first line we substitute $s = p^2r$, the first inequality comes from an upper bound on the ceiling function, the second inequality follows from $r^{-1/2}\leq\frac{3}{2}r^{-1/2}$, and the last inequality holds for $r \geq 1$.
\end{proof}

\subsection{Almost maximally rigid examples of non-closedness}
It remains to prove non-closedness of $\LS_n(r,s)$ sets for as high
ranks $r$ and sparsities $s$ as possible; partial results in this
direction are discussed in this section.
There cannot be a maximally rigid sequence converging outside $\LS\left(r,(n-r)^2\right)$ because $\LS\left(r,(n-r)^2\right)$ corresponds to the set of all $\pR^{n\times n}$ matrices. 
Similarly, it is necessary that both $r\geq 1$ and $s \geq 1$ hold since sets of rank $r$ matrices $\LS(r,0)$ and sets of sparsity $s$ matrices $\LS(0,s)$ are both closed.
As a consequence, the highest possible rank and sparsity for which $\LS_n(r,s)$ is not closed corresponds to one strictly less than the maximal rigidity bound, i.e. $\LS\left(r, (n-r)^2-1\right)$ for $r \geq 1$ and also $s = (n-r)^2-1 \geq 1$. 

It is shown in \cite{Kumar2014using} that the matrix rigidity function might not be semicontinuous even for maximally rigid matrices. This translates into the set $\LS_3(1,3)$ not being closed as we have $M(\epsilon)\in\LS_3(1, 3)$ which converges to $M\not\in\LS_3(1,3)$ by choosing
\begin{equation}\label{eq:notassigned}
	M = \begin{pmatrix}
    		a & b & c \\
		d & e & 0 \\
		g & 0 & i
    	\end{pmatrix} \quad \textrm{and} \quad
	M(\epsilon) = \begin{pmatrix}
		a & b & c \\
		d & e & \epsilon cd \\
		g & \epsilon bg & i
	\end{pmatrix}.
\end{equation}
It is easy to check that for a general choice of $\left\{a, \ldots, i\right\}$, $M$ is maximally rigid with $\Rig(M, 1) = 4$. However, $\Rig\left(M(\epsilon), 1\right) = 3$ since $M(\epsilon)$ can be expressed in the following way
\begin{equation}
	M(\epsilon) = \begin{pmatrix}
    		\epsilon^{-1} & b & c \\
		d & \epsilon bd & \epsilon cd \\
		g & \epsilon bg & \epsilon cg
    	\end{pmatrix} + \begin{pmatrix}
    		a-\epsilon^{-1} & 0 & 0 \\
		0 & e-\epsilon bd & 0 \\
		0 & 0 & i-\epsilon cg
    	\end{pmatrix}.
\end{equation}
Having established $\LS_3(1,3)$ is not a closed set, which is the optimal result with the highest possible sparsity for sets of rank $1$ matrices of size $3\times 3$.
We pose the question as to whether this result can be generalized and the following conjecture holds.
\begin{conjecture}[Almost maximally rigid non-closedness] \label{conj:maxrigid}
The low-rank plus sparse set $\LS_n(r,s)$ is not closed provided
\begin{equation}
	n \geq r + (s+1)^{1/2},
\end{equation}
for $s \in [1, (n-1)^2 -1]$ and $r \in [1, n-2]$. 
\end{conjecture}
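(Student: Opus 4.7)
The plan is to generalize the $\LS_3(1,3)$ example of \eqref{eq:notassigned} to all valid $(n,r,s)$. Write $k := \ceil{(s+1)^{1/2}}$, so that $k^2 \geq s+1$ and the conjecture's hypothesis $n \geq r + (s+1)^{1/2}$ yields $n \geq r + k$, with any excess absorbed by zero-padding. The target matrix and its approximating sequence will be of the block form
\begin{equation*}
M = \begin{pmatrix} A & B \\ C & D \end{pmatrix}, \qquad M(\epsilon) = \begin{pmatrix} A & B \\ C & D(\epsilon) \end{pmatrix},
\end{equation*}
with $A \in \pR^{r \times r}$ invertible and diagonal, $B \in \pR^{r \times k}$ and $C \in \pR^{k \times r}$ generic, and $D \in \pR^{k \times k}$ carrying a carefully chosen sparsity pattern.

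To show $M(\epsilon) \in \LS_n(r, s)$ I would use the divergent rank-$r$ decomposition of Lemma \ref{lemma1},
\begin{equation*}
L(\epsilon) := \begin{pmatrix} \epsilon^{-1} I_r \\ C \end{pmatrix} \begin{pmatrix} I_r & \epsilon B \end{pmatrix} = \begin{pmatrix} \epsilon^{-1} I_r & B \\ C & \epsilon C B \end{pmatrix},
\end{equation*}
and set $D(\epsilon)_{ij} = D_{ij}$ at positions in $\mathrm{supp}(D)$ and $D(\epsilon)_{ij} = \epsilon (CB)_{ij}$ elsewhere, so that $M(\epsilon) \to M$ while $S(\epsilon) := M(\epsilon) - L(\epsilon)$ is supported only on the $r$ diagonal entries of $A - \epsilon^{-1} I_r$ and on $\mathrm{supp}(D)$. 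Choosing $|\mathrm{supp}(D)| \leq s - r$ then gives $\|S(\epsilon)\|_0 \leq s$. This works directly when $r \leq s$; in the complementary regime $r > s$ the budget cannot absorb the $r$ diagonal entries, and I would instead embed a smaller instance of the $r' \leq s$ construction diagonally alongside a padded identity block, following the strategy distinguishing Lemma \ref{lemma3} from Lemma \ref{lemma2}.

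The essential challenge is the rigidity lower bound $\Rig(M, r) \geq s + 1$, which guarantees $M \notin \LS_n(r, s)$. For the base case $(n, r, s) = (3, 1, 3)$ this reduces to the direct $2\times 2$-minor case analysis of \cite{Kumar2014using}. For general parameters the plan is a Schur-complement argument: for any candidate sparsity mask $T$ with $|T| \leq s$, partition $T$ into restrictions $T_A, T_B, T_C, T_D$ on the four blocks and, whenever $A - S_A$ is invertible, recast the constraint $\rank(M - S) \leq r$ as the vanishing of the $k \times k$ Schur complement $(D - S_D) - (C - S_C)(A - S_A)^{-1}(B - S_B)$. For generic $A, B, C$, the $k^2$ entries of this expression are rational functions whose simultaneous vanishing cuts out a variety of codimension $k^2 \geq s + 1$ in the space of perturbations supported on $T$; since this exceeds the $|T| \leq s$ available parameters, no admissible $S$ exists. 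The hard part will be handling the degenerate cases where $A - S_A$ drops rank (so the Schur reduction fails and one must control rank directly through $(r+1)\times (r+1)$ minors), and where edits to $A$, $B$, or $C$ are coordinated through shared rows or columns to zero several Schur entries at once — precisely the intersecting-part phenomenon bookkept in the proof of Lemma \ref{lemma2}, which here must be sharpened by genericity of $A, B, C$ to squeeze out the tight bound $s + 1$ rather than the coarser $2\lceil (s+1)/2\rceil$ obtained there.
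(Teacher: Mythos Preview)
This statement is a \emph{conjecture} in the paper, not a theorem: the authors explicitly ``pose the question as to whether this result can be generalized'' and provide no proof beyond the single base case $\LS_3(1,3)$ from \cite{Kumar2014using}. So there is no paper proof to compare against; you are attempting an open problem.

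On the merits, your outline for $M(\epsilon)\in\LS_n(r,s)$ is sound and in the spirit of Lemma~\ref{lemma1}, though the $r>s$ embedding needs the sizes checked against the tight bound $n\geq r+(s+1)^{1/2}$ rather than the looser bounds of Lemma~\ref{lemma3}. The real issue is the rigidity half. Your codimension heuristic --- that the $k^2$ Schur-complement equations cut out a variety of codimension $k^2\geq s+1$ in a parameter space of dimension $|T|\leq s$, hence empty --- is not yet an argument. It presumes the $k^2$ rational functions in the entries of $S|_T$ are algebraically independent for generic $A,B,C$, which is exactly what must be proven and is far from automatic: editing a single entry of $B$ or $C$ perturbs an entire row or column of $CA^{-1}B$, so one parameter can move $k$ Schur entries simultaneously, and the na\"ive count collapses. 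You flag this as the ``intersecting-part phenomenon'' but offer no mechanism for controlling it beyond invoking genericity; the paper's Lemma~\ref{lemma2} handles the analogous overlap only by a coarse disjointness-of-supports argument that yields a bound quadratically worse than what the conjecture demands. You also wave off the degenerate case where $A-S_A$ loses rank, but that case is not peripheral: an adversary can spend up to $r$ of the $s$ edits inside $A$, and the Schur reduction then fails entirely, forcing a direct minor argument you have not supplied.

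In short, you have correctly identified the construction and the proof obligation, but the rigidity lower bound remains a genuine gap, and closing it is the content of the conjecture.
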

%
%
%
%

\section{Numerical examples with divergent Robust PCA and matrix completion\label{sec:numerics}}

Theorem \ref{thm:main_intro} and the constructions in Section \ref{sec:proofs} indicate that there are matrices for which Robust PCA and matrix completion, as stated in \eqref{eq:RPCA_problem} and \eqref{eq:MC_problem} respectively, are not well defined.  In particular, the objective can be driven to zero while the components diverge with unbounded norms.  Herein we give examples of two simple matrices which are of a similar construction to $M$ in \eqref{eq:RPCA_example},
\begin{equation*}
	M^{(1)} = \begin{pmatrix}
		2 & -1 & -1\\
		-1 & 0 & 0 \\
		-1 & 0 & 0
	\end{pmatrix}, \quad 
	M^{(2)} = \begin{pmatrix}
		1 & -2 & -2\\
		-2 & 0 & 0 \\
		-2 & 0 & 0
	\end{pmatrix},
\end{equation*}
which are not in $\LS(1,1)$, but can be approximated by an arbitrarily close $M^{(1)}_\epsilon,M^{(2)}_\epsilon\in \LS(1,1)$, and for which popular RPCA and MC algorithms exhibit this divergence.  This is analogous to the problem of diverging components for CP-rank decomposition of higher order tensors which is especially pronounced for algorithms employing alternating search between individual components \cite{Silva2008tensor}.

\begin{figure}[t]
	\centering
	\begin{subfigure}{0.45\textwidth}
		\includegraphics[width=1\textwidth]{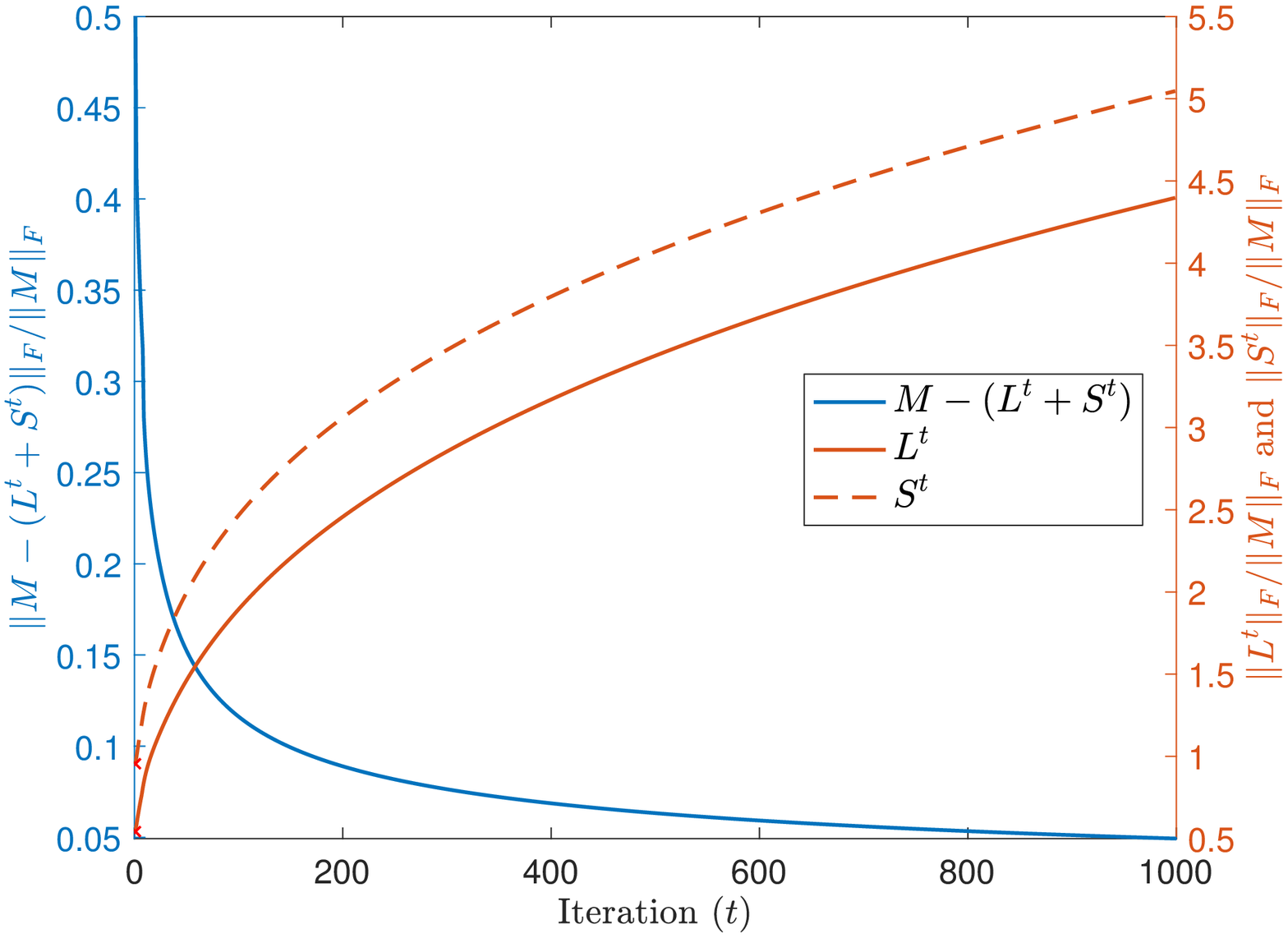}
		\caption{FastGD \cite{Yi2016fast} applied to $M^{(1)}$.\label{fig:rpca_fastgd1}}
	\end{subfigure}
	\hspace{0.05\textwidth}
	\begin{subfigure}{0.45\textwidth}
		\includegraphics[width=1\textwidth]{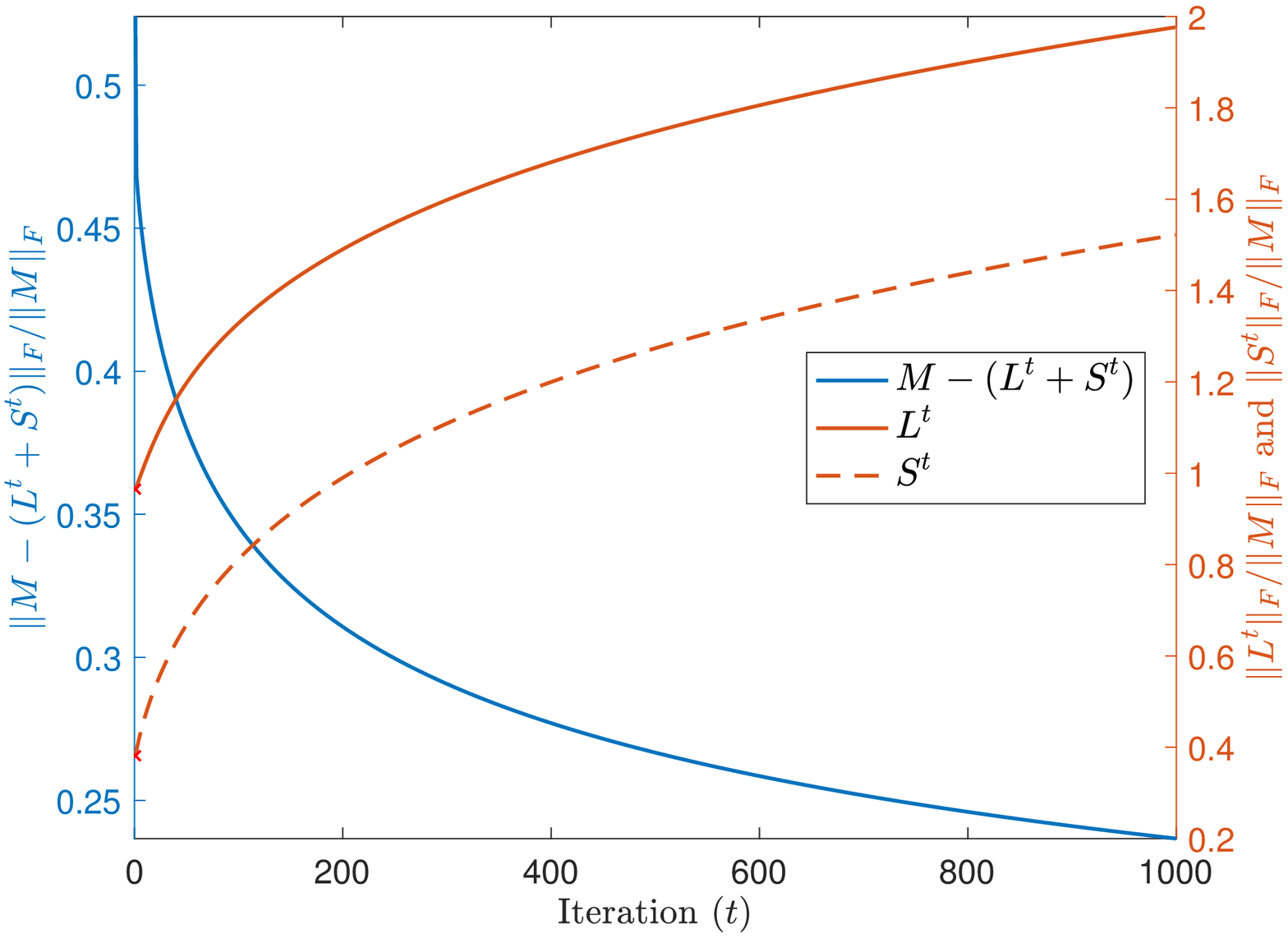}
		\caption{AltMin \cite{Gu2016low} applied to $M^{(2)}$\label{fig:rpca_altmin1}.}
	\end{subfigure}
	\begin{subfigure}{0.45\textwidth}
		\includegraphics[width=1\textwidth]{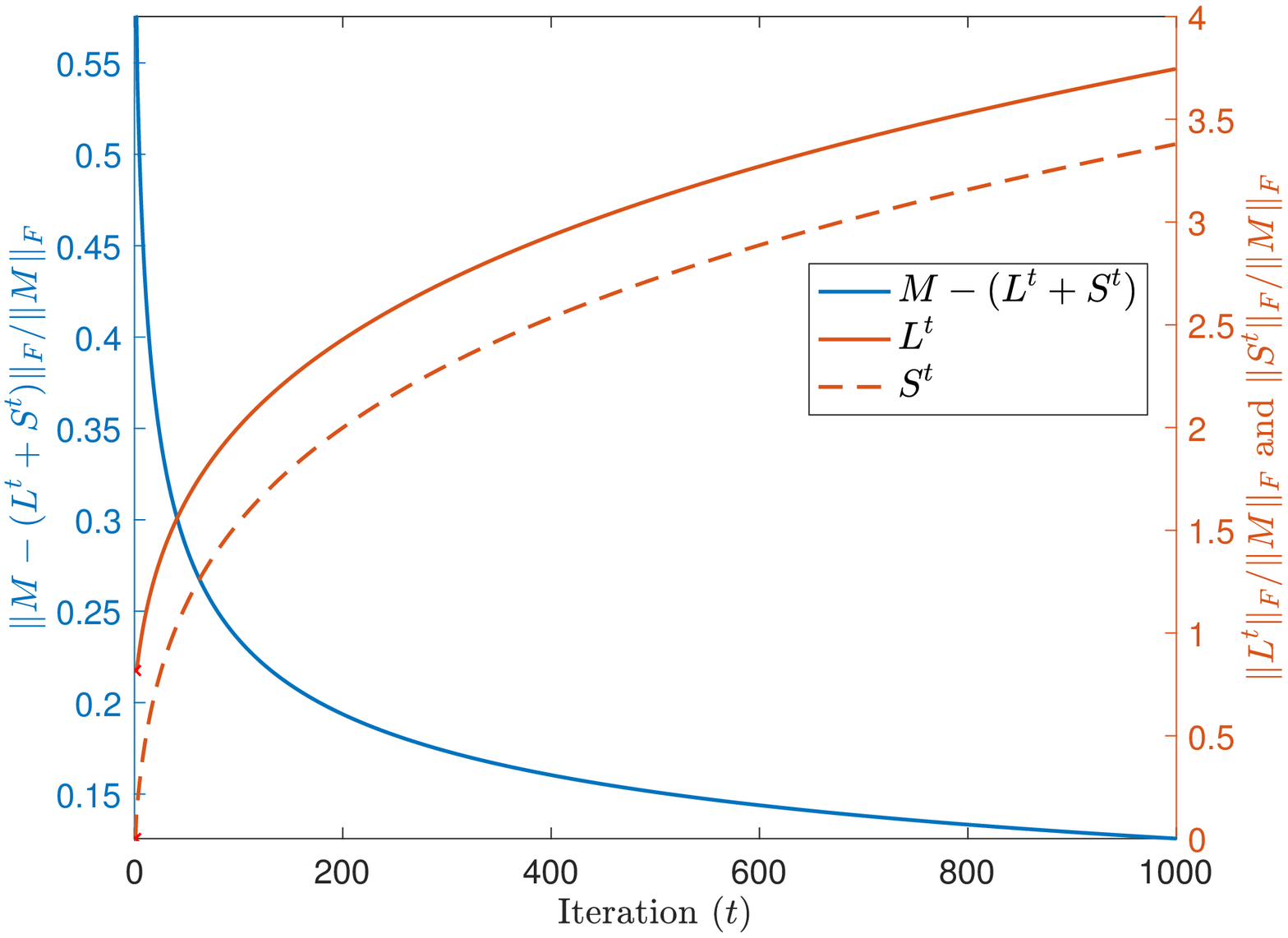}
		\caption{AltProj \cite{Dutta2018nonconvex} applied to $M^{(2)}$.\label{fig:rpca_altproj_pr1}}
	\end{subfigure}
	\hspace{0.05\textwidth}
	\begin{subfigure}{0.45\textwidth}
		\includegraphics[width=1\textwidth]{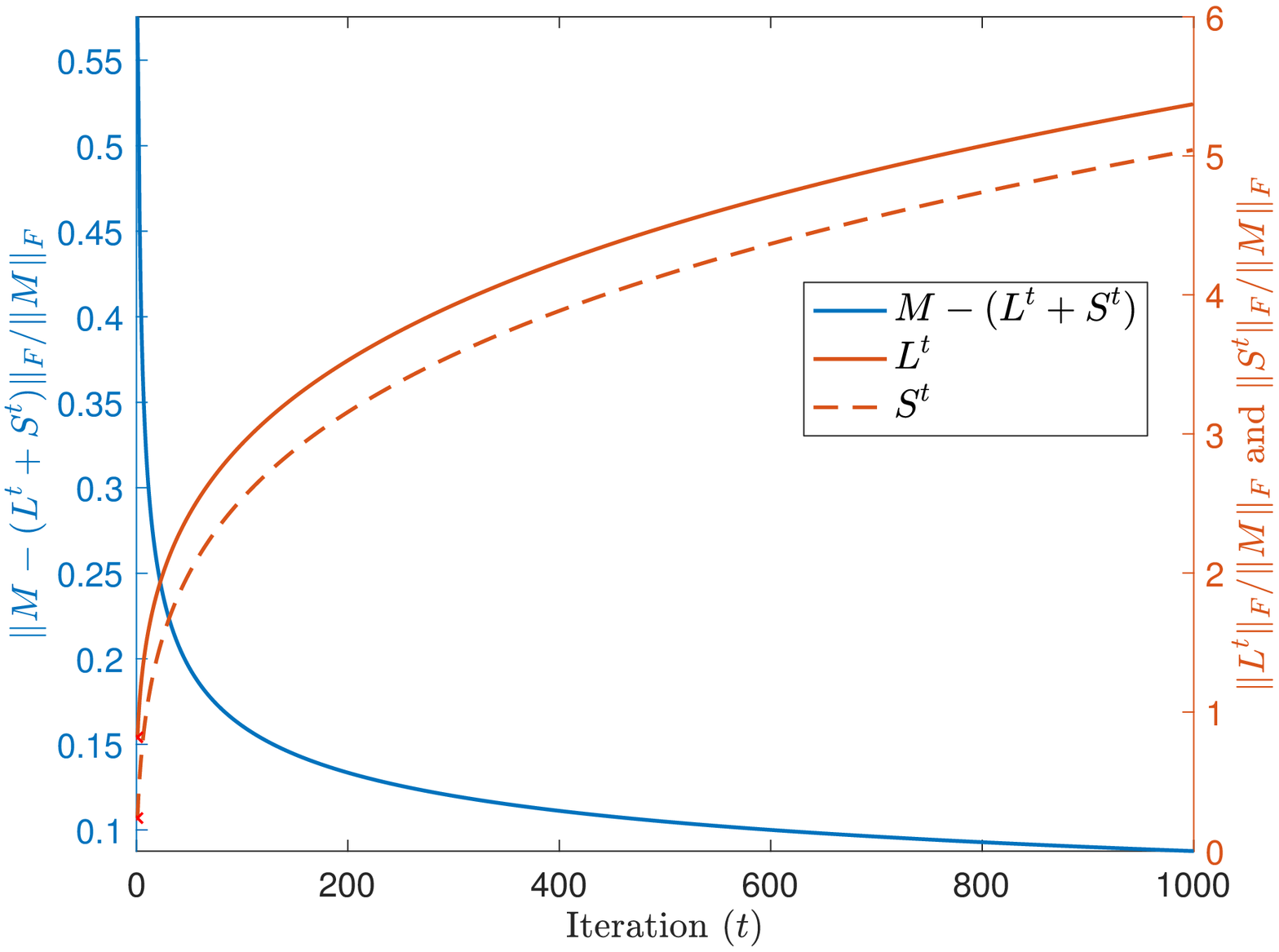}
		\caption{GoDec \cite{Zhou2011godec} applied to $M^{(2)}$.\label{fig:rpca_godec1}}
	\end{subfigure}
	\vspace{-1em}
	\caption{Solving for an $\LS(1,1)$ approximation to $M^{(1)}$ and $M^{(2)}$ using four non-convex Robust PCA algorithms. Despite the norm of the residual $\| M - (L^t +S^t)\|_F$ converging to zero, norms of the constituents $L^t, S^t$ diverge. We set algorithms parameters $r = 1, s = 1$ where possible. For FastGD we set $\lambda = 3.23$ and stepsize $\eta = 1/6$ which corresponds to choosing $s=1$. For GoDec we set the low-rank projection precision parameter to be $10$.\label{fig:rpca11-nonconvex}}
\end{figure}

Non-convex algorithms for solving the Robust PCA problem \eqref{eq:RPCA_problem} are typically observed to be faster than convex relaxations of the problem and often are able to recover matrices with higher ranks than possible by solving the convex relaxation \eqref{eq:RPCA_problem_convex}.  We explore the performance of four widely considered non-convex Robust PCA algorithms: Fast Robust PCA via Gradient Descent (FastGD) \cite{Yi2016fast}, Alternating Minimization (AltMin) \cite{Gu2016low}, Alternating Projection (AltProj) \cite{Dutta2018nonconvex}, and Go Decomposition (GoDec) \cite{Zhou2011godec}  applied to $M^{(1)}$ or $M^{(2)}$ with algorithm parameters set to rank $r=1$ and sparsity $s=1$. 
The matrices $M^{(1)}$ and $M^{(2)}$ have values chosen so that the algorithm default initialization causes divergence. While these are particularly simple examples, we would not wish to claim this result is generic in that we do not typically observe divergence for randomly sampled instance of $\alpha, \beta$ in \eqref{eq:lemma2_construction} unless the initialization of the algorithm is adjusted to be near the diverging sequence.
In each case Figure \ref{fig:rpca11-nonconvex} shows the convergence of the residual $\min_{X\in \pR^{m\times n}} \|X-M\|_F$ to zero while the norms of the constituents of $M=L+S$ diverge.  

A line of work suggests adding a regularization term to the objective \cite{Gu2016low,Ge2017no,Zhang2017a}. This leads to bounding the energy of components resulting in the optimization problem to have a global minimum with bounded energies of the constituents. However, the issue of ill-posedness is a more fundamental one; the best rank-$r$ and sparsity-$s$ approximation still has no solution. We observe in Figure \ref{fig:ls11-convex} that energy regularizers result in solutions that are not in the desired space $\LS(r,s)$ for values of $(r,s)=(1,1)$ where the unregularized solution has unbounded energy of its constituents.

\begin{figure}[t]
	\centering
	\begin{subfigure}{0.45\textwidth}
		\includegraphics[width=1\textwidth]{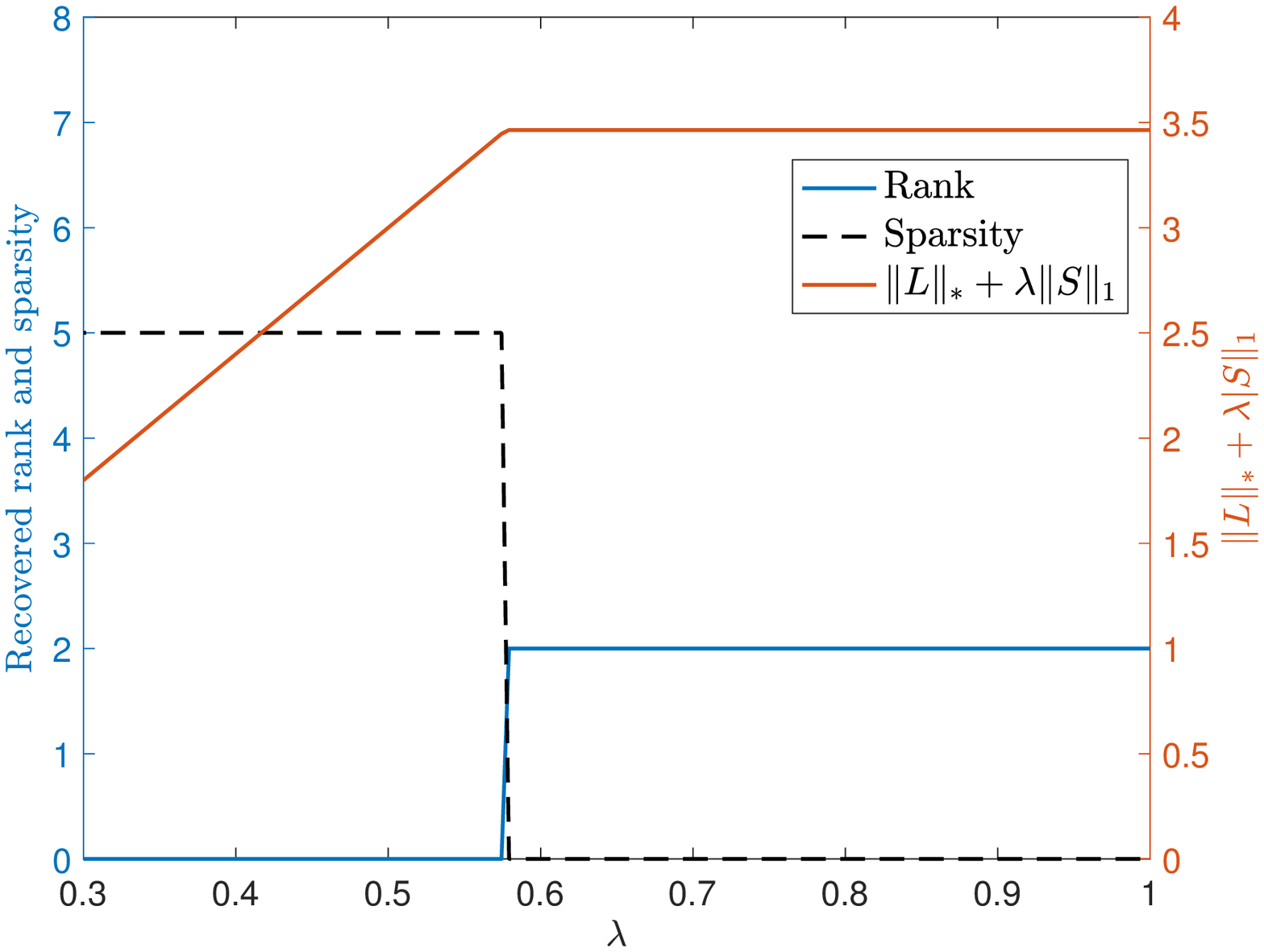}
		\caption{PCP \cite{Candes2011robust} applied to $M^{(1)}$.}
	\end{subfigure}
	\hspace{0.05\textwidth}
	\begin{subfigure}{0.45\textwidth}
		\includegraphics[width=1\textwidth]{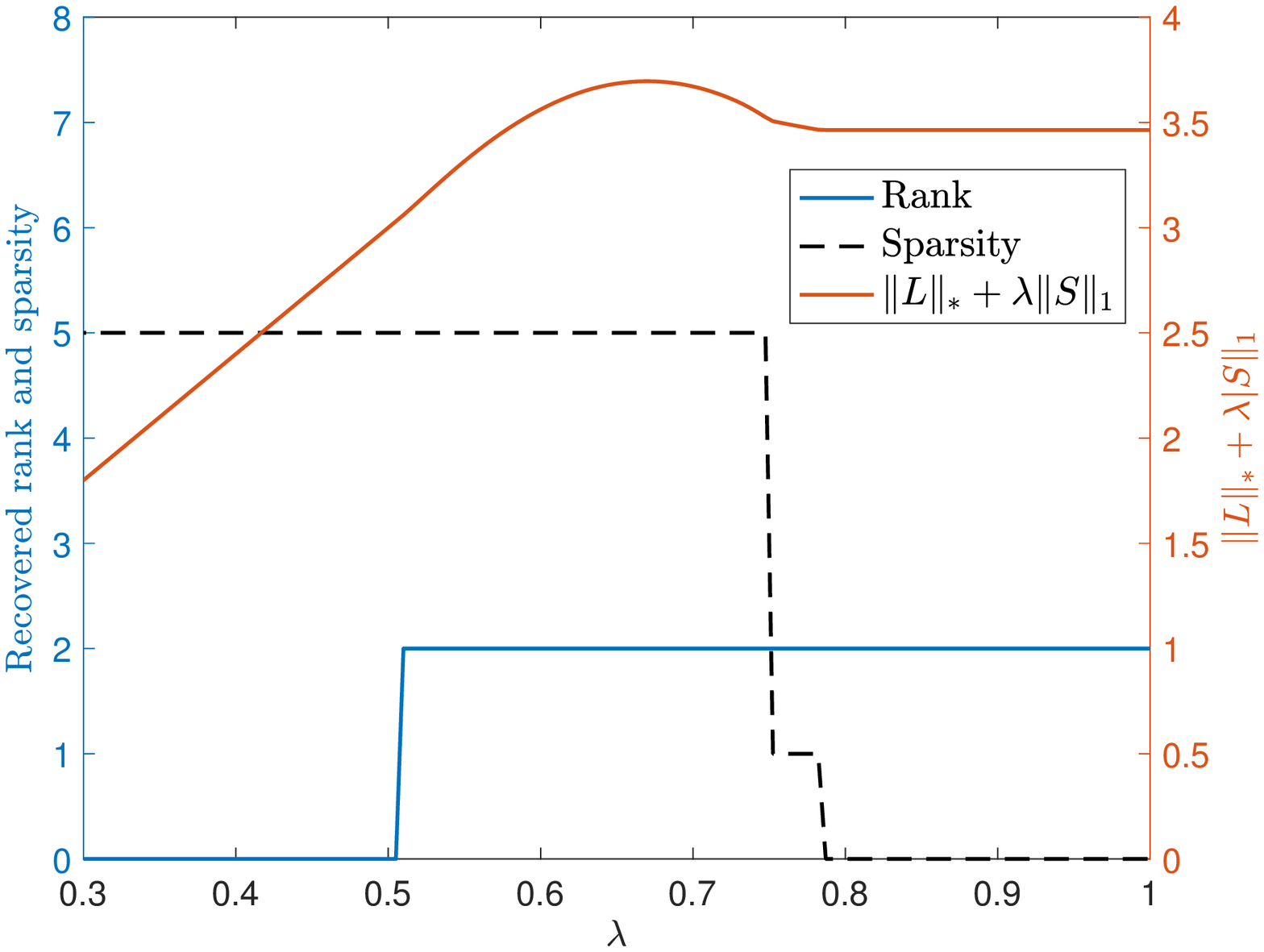}
		\caption{IALM \cite{Lin2010augmented} applied to $M^{(1)}$.}
	\end{subfigure}
	\vspace{-1em}
	\caption{Recovered ranks and sparsities using two convex Robust PCA algorithms applied to $M^{(1)}$ with varying choice of $\lambda$. Both PCP and IALM do not recover the $r=1, s=1$ solution for any $\lambda$. IALM recovers solutions with overspecified degrees of freedom $r=2, s= 5$ for $\lambda$ roughly $1/2$.\label{fig:ls11-convex}}
\end{figure}

The diverging constituents in Figure \ref{fig:rpca11-nonconvex} follow the selected $(r,s)$ for which $M^{(1)},M^{(2)}\not\in\LS(r,s)$ but produce a sequence $L^t+ S^t\in\LS(r,s)$ and $\lim_{t\to\infty} L^t + S^t = M^{(i)}$ but $\|L^t\|_F$ and $\|S^t\|_F$ diverge. This phenomenon does not occur for these matrices if we allow other choices of $(r,s)$. In particular, Alternating Projection method \cite{Netrapalli2014provable} has the rank constraint prescribed and the sparsity constraint is chosen adaptively based on the parameter $\beta$ and the largest singular value of the low-rank component. Such methods, that do not prescribe both $r$ and $s$, are less susceptible to the diverging constituents problem. Methods such as the Alternating Projection \cite{Netrapalli2014provable} typically have a parameter which controls values of $(r,s)$ and can be selected, such that when applied to $M^{(1)}$ it gives a local minimum in $\LS(1,1)$.

Convex relaxations of RPCA such as posed in \eqref{eq:RPCA_problem_convex} do not suffer from the divergence of constituents as shown in Figure \ref{fig:rpca11-nonconvex} due to their explicit minimization of their norms.  However, they suffer from sub-optimal performance.  Figure \ref{fig:ls11-convex} depicts recovered ranks, sparsities and their convex relaxations based on choice for $\lambda$ of $M^{(1)}$ for Principal Component Pursuit by Alternating Directions (PCP) \cite{Candes2011robust} and Inexact Augmented Lagrangian Method (IALM) \cite{Lin2010augmented}.  For both PCP and IALM, as the regularization parameter $\lambda$ is increased from near zero it first produced a solution with $r=0$ and $s=5$, then at approximately $\lambda=1/2$ transitions to solutions with overspecified degrees of freedom $r=2$ and $s=5$, and then for large values of $\lambda$ gives solutions with $r=2$ and $s=0$.  It is interesting to note that for these convex relaxations of RPCA there were no values of $\lambda$ that would produce a solutions with $r=1$ and $s=1$ which are the parameters for which the non-convex RPCA algorithms diverge.  In contrast, the aforementioned non-convex algorithms for RPCA applied to $M^{(1)}$ converge to zero residual with bounded constituents for the rank and sparsity parameters generated by PCP and IALM.

\begin{figure}[t]
	\centering
	\begin{subfigure}{0.45\textwidth}
		\includegraphics[width=1\textwidth]{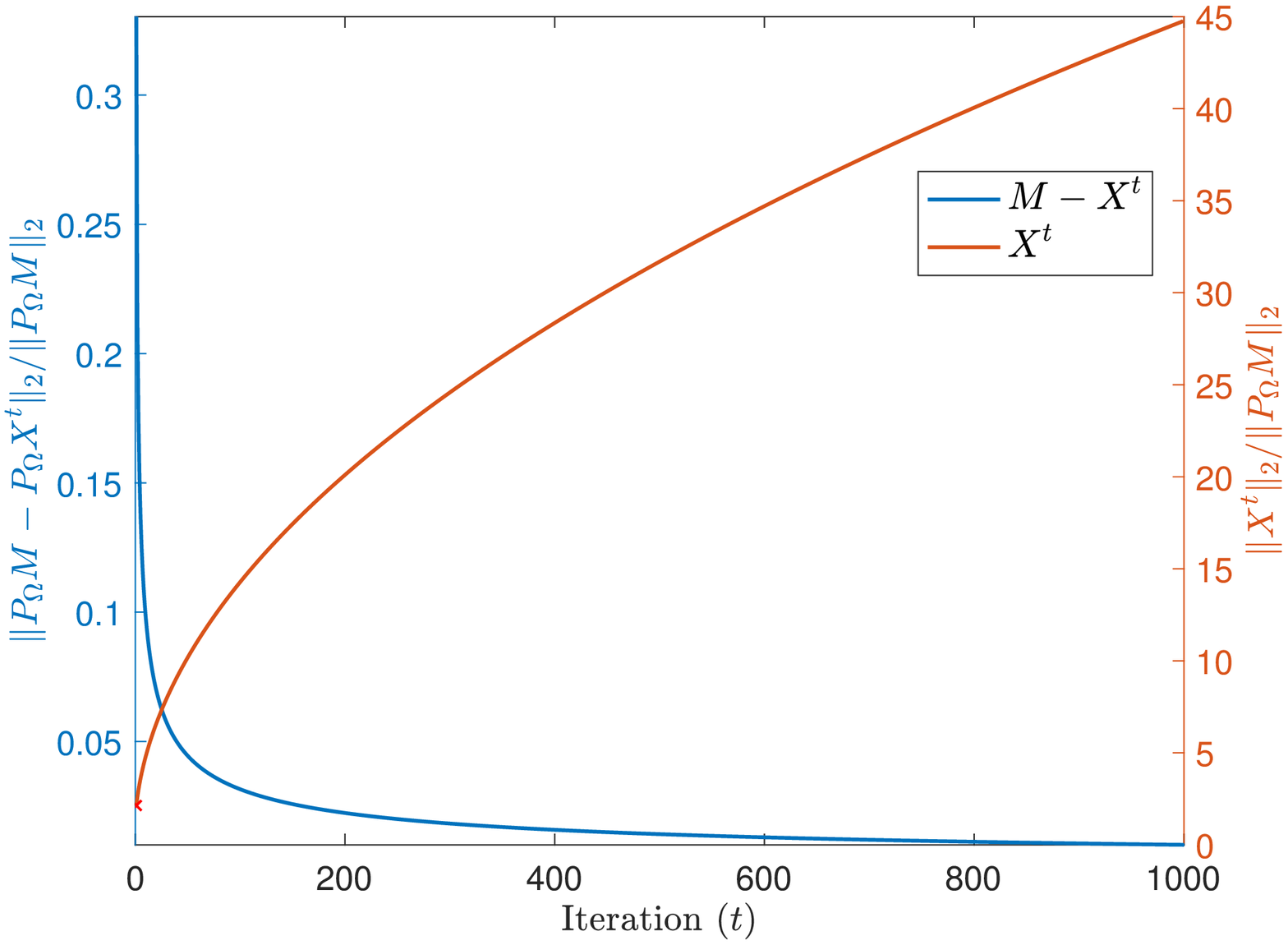}
		\caption{PF \cite{Haldar2009rank} applied to $M^{(1)}$.\label{fig:mc_niht1}}
	\end{subfigure}
	\begin{subfigure}{0.45\textwidth}
		\includegraphics[width=1\textwidth]{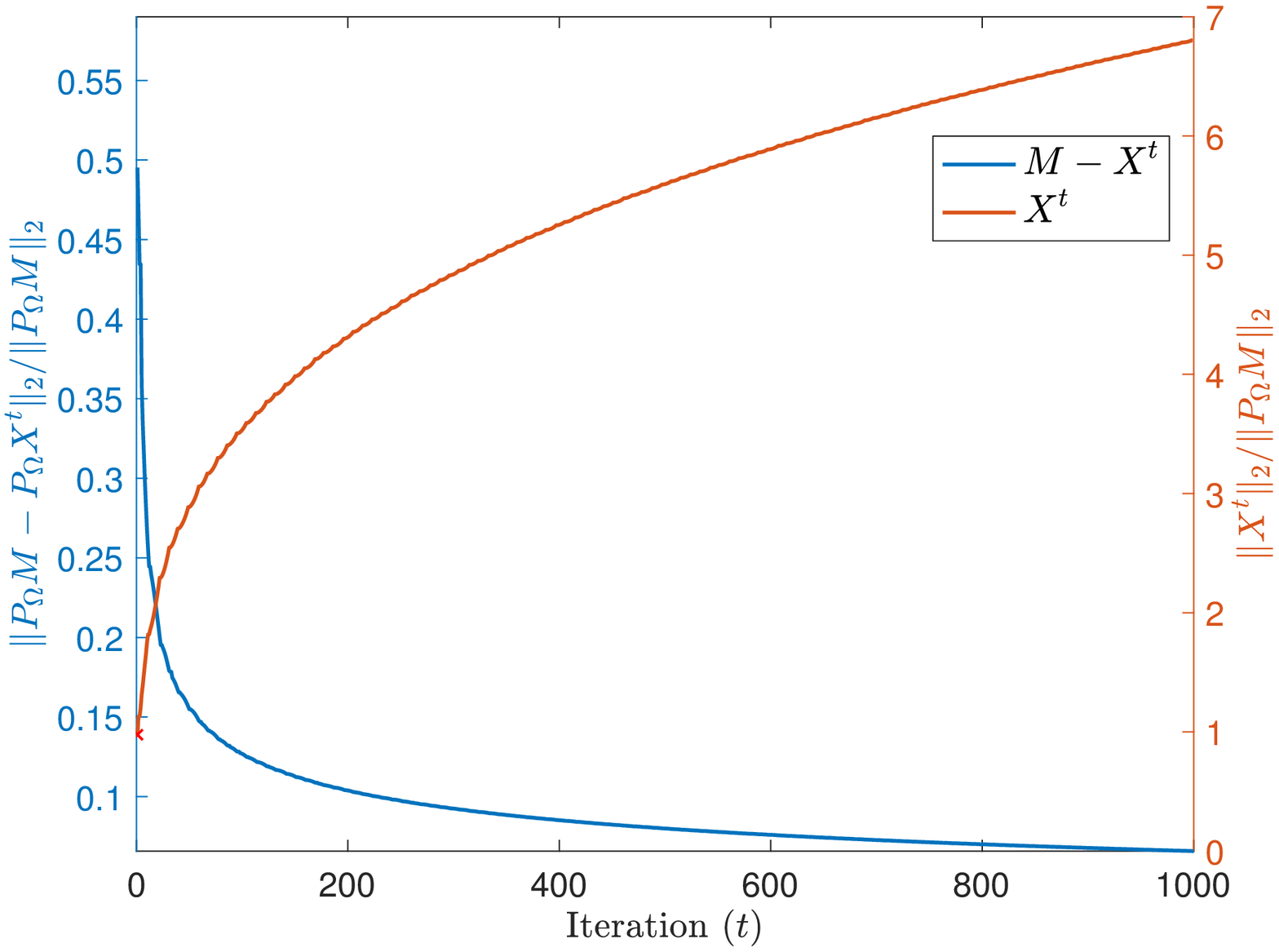}
		\caption{LMaFit \cite{Wen2012solving} applied to $M^{(1)}$.\label{fig:mc_lmafit1}}
	\end{subfigure}
	\begin{subfigure}{0.45\textwidth}
		\includegraphics[width=1\textwidth]{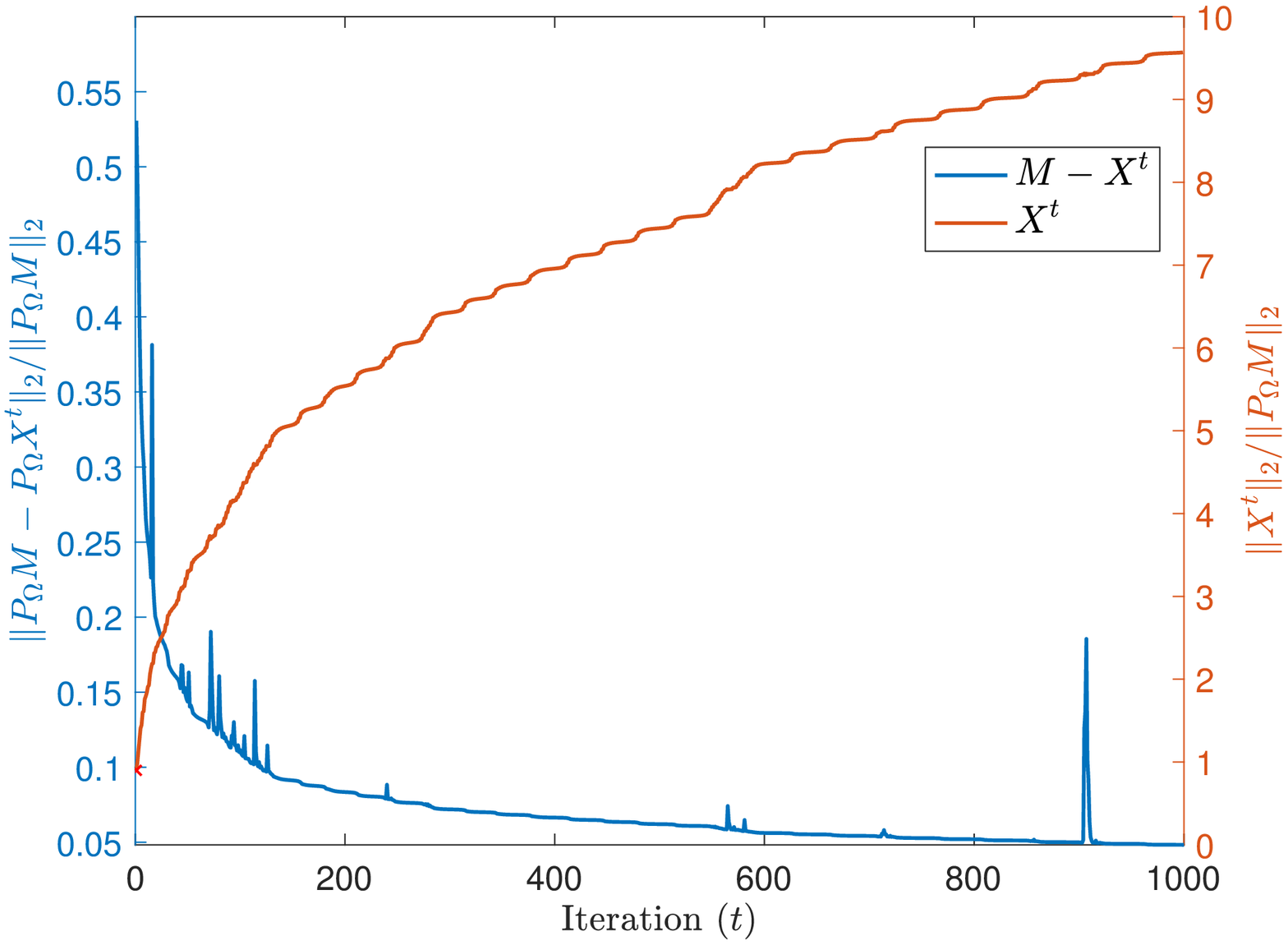}
		\caption{CGIHT with restarts \cite{Blanchard2015cgiht} applied to $M^{(1)}$. \label{fig:mc_cgiht1}}
	\end{subfigure}
	\begin{subfigure}{0.45\textwidth}
		\includegraphics[width=1\textwidth]{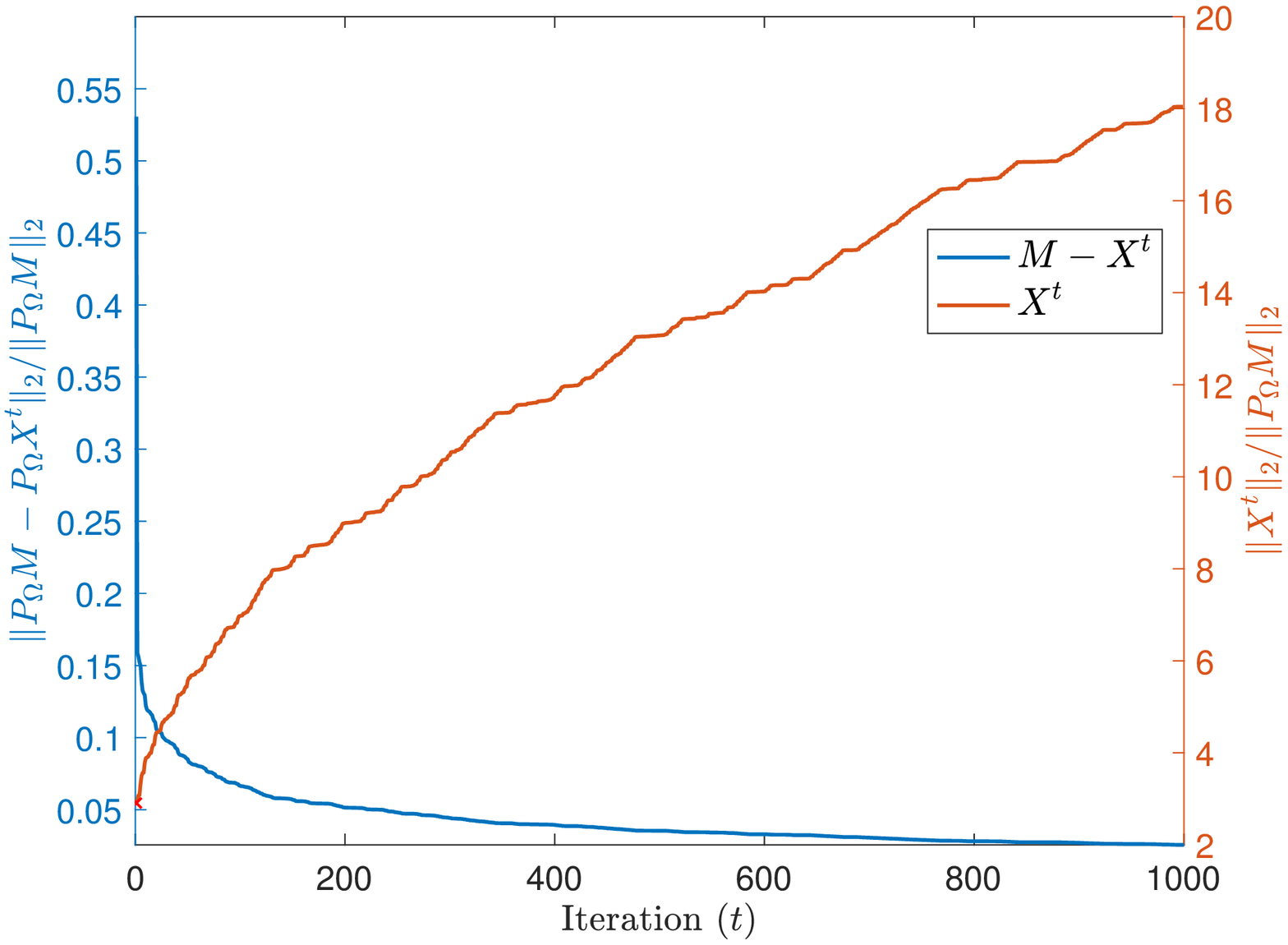}
		\caption{ASD \cite{Tanner2014alternating} applied to $M^{(1)}$.\label{fig:mc_asd1}}
	\end{subfigure}
	\vspace{-1em}
	\caption{Recovery of $M^{(1)}$ given a rank $1$ constraint by four non-convex matrix completion algorithms. Despite the norm of the residual $\| y - \PO (X^t) \|_F$ converging to zero, the norm of the recovered matrix $X^t$ diverges.\label{fig:ls11_mc}}
\end{figure}

Similar to the divergence of the non-convex RPCA algorithms, non-convex matrix completion algorithms applied to $M^{(1)}$ with only the top left, index $(1,1)$, entry missing can diverge\footnote{It is required to provide the algorithm with an initial guess that does not have $0$ as the top left entry.}.\enlargethispage{\baselineskip}
Figure \ref{fig:ls11_mc} depicts the residual error converging to zero and energy of the recovered low-rank matrix diverging for four exemplar non-convex algorithms: Power Factorization (PF) \cite{Haldar2009rank}, Low-Rank Matrix Fitting (LMaFit) \cite{Wen2012solving}, Conjugate Gradient Iterative Hard Thresholding (CGIHT) \cite{Blanchard2015cgiht} and Alternating Steepest Descent (ASD) \cite{Tanner2014alternating}. 

\section{Conclusion}

This work brings to attention an overlooked issue in Robust PCA and matrix completion: that both problems can be ill-posed because the set of low-rank plus sparse matrices is not closed without further conditions being set on the constituent matrices.  It remains to be determined what fraction of the set $L_{m,n}(r,s)$ is open, or similarly what fraction has constituents whose norm exceeds a prescribed threshold to ensure well conditioning; it should be noted that in the case of Tensor CP rank the fraction of the space of tensors with unbounded constituent energy is a positive measure \cite{Silva2008tensor}.  It also remains to determine what is the maximal matrix size $n$, as a function of $(r,s)$, such that the set $\LS_n(r,s)$ is open. We give lower bound of $n(r,s) \geq (r+1)(s+2)$ and $n(r,s) \geq (r+2)^{(3/2)}s^{1/2}$ in Theorem \ref{thm:main_intro} and conjecture the best attainable bound is achieved at $n(r,s) \geq r + (s+1)^{1/2}$ using bounds on maximum matrix rigidity, see Conjecture \ref{conj:maxrigid}.  Moreover, we note that there are references in the literature \cite{Gu2016low, Waters2011sparcs} which reference the use of a restricted isometry property for $\LS_n(r,s)$ in order to prove recovery of RPCA using non-convex algorithms. A consequence of our result is that the lower RIP bound is not satisfied for some $M\in\LS(r,s)$ unless further restrictions are imposed on the constituents such as bounds on the energy of $L$ and $S$ which compose $M$. Another consequence is that there exist semidefinite programs for which the formulation using the Burer-Monteiro low-rank factorization \cite{Burer2003a, Boumal2016nonconvex} will cease to have a solution while the original problem has a solution.

\bibliographystyle{siam}
\bibliography{library}

\end{document}